\documentclass[english,times,doublespace]{article}
\usepackage[T1]{fontenc}
\usepackage[latin9]{inputenc}
\usepackage{float}
\usepackage{amsmath}
\usepackage{amsthm}
\usepackage{amssymb}
\usepackage{esint}

\makeatletter

\providecommand{\tabularnewline}{\\}
\floatstyle{ruled}
\newfloat{algorithm}{tbp}{loa}
\providecommand{\algorithmname}{Algorithm}
\floatname{algorithm}{\protect\algorithmname}

\newcommand{\lyxaddress}[1]{
\par {\raggedright #1
\vspace{1.4em}
\noindent\par}
}
\theoremstyle{plain}
\newtheorem{thm}{\protect\theoremname}
  \theoremstyle{definition}
  \newtheorem{defn}[thm]{\protect\definitionname}
  \theoremstyle{plain}
  \newtheorem{lem}[thm]{\protect\lemmaname}
  \theoremstyle{plain}
  \newtheorem{prop}[thm]{\protect\propositionname}

\renewcommand{\vec}[1]{\mbox{\boldmath$#1$}}

\usepackage{xfrac} 
\usepackage{algorithmic}

\makeatother

\usepackage{babel}
  \providecommand{\definitionname}{Definition}
  \providecommand{\lemmaname}{Lemma}
  \providecommand{\propositionname}{Proposition}
\providecommand{\theoremname}{Theorem}

\begin{document}

\title{Overcoming Element Quality Dependence of Finite Elements with Adaptive
Extended Stencil FEM (AES-FEM)}

\author{Rebecca Conley, Tristan J. Delaney and Xiangmin Jiao{*}}

\maketitle

\lyxaddress{Department of Applied Mathematics \& Statistics, Stony Brook University,
Stony Brook, NY 11794\\
{*}xiangmin.jiao@stonybrook.edu}
\begin{abstract}
The finite element methods (FEM) are important techniques in engineering
for solving partial differential equations, but they depend heavily
on element shape quality for stability and good performance. In this
paper, we introduce the \emph{Adaptive Extended Stencil Finite Element
Method} (AES-FEM) as a means for overcoming this dependence on element
shape quality. Our method replaces the traditional basis functions
with a set of \emph{generalized Lagrange polynomial (GLP) basis functions},
which we construct using local weighted least-squares approximations.
The method preserves the theoretical framework of FEM, and allows
imposing essential boundary conditions and integrating the stiffness
matrix in the same way as the classical FEM. In addition, AES-FEM
can use higher-degree polynomial basis functions than the classical
FEM, while virtually preserving the sparsity pattern of the stiffness
matrix. We describe the formulation and implementation of AES-FEM,
and analyze its consistency and stability. We present numerical experiments
in both 2D and 3D for the Poisson equation and a time-independent
convection-diffusion equation. The numerical results demonstrate that
AES-FEM is more accurate than linear FEM, is also more efficient than
linear FEM in terms of error versus runtime, and enables much better
stability and faster convergence of iterative solvers than linear
FEM over poor-quality meshes.\\
Key Words: finite element methods; mesh quality; weighted least squares;
partition of unity; accuracy; stability
\end{abstract}

\section{Introduction}

The finite element methods (FEM) are arguably one of the most important
numerical tools for solving partial differential equations (PDE) over
complex domains in engineering. They account for an overwhelming majority
of the commercial and research code for modeling and simulations,
and there is a vast amount of theoretical work to provide a rigorous
foundation; see e.g., \cite{Zienkiewicz2013FEM}.

Despite their apparent success in many applications, classical finite
element methods have a very fundamental limitation: \emph{they are
dependent on element shape quality. }\textit{\emph{This is especially
true for elliptic and parabolic problems, for which the resulting
linear system is often ill-conditioned if a mesh contains a few ``bad''
elements. This can lead to very slow convergence of iterative solvers
and sometimes even a loss of accuracy. Because of this, researchers
and users of FEM often spend a tremendous amount of time and computing
power to generate and maintain meshes, trying to fix that one last
bad element. }}This has spurred much successful research in meshing,
such as Delaunay triangulation \cite{shewchuk2002delaunay,si2015tetgen},
advancing front \cite{Lo1985}, octree-based methods \cite{Shep1991},
etc. However, the meshing problem continues to become more and more
challenging as applications become more and more sophisticated and
demanding, and also with the increased use of higher-order methods,
such as spectral element methods \cite{Patera1984}, discontinuous
Galerkin methods \cite{Arnold2002,Cockburn2000}, etc.

The FEM community has long considered this dependency on element quality
as a critical issue, and the community has been actively searching
for alternative methods to mitigate the issue for decades. Examples
of such alternative methods include the diffuse element or element-free
Galerkin methods \cite{Bely1994,nayroles1992generalizing}, least-squares
FEM \cite{bochev1998finite}, generalized or meshless finite different
methods \cite{Jensen1972,Perrone1975,Liszka1980,Benito2001,Milewski2012},
generalized or extended FEM \cite{Belytschko2009,Fries2010XFEM},
and partition-of-unity FEM \cite{melenk1996partition}. To reduce
the dependency on mesh quality, these methods avoid the use of the
piecewise-polynomial Lagrange basis functions found in the classical
FEM. However, they also lose some advantages of the classical FEM.
In particular, for the generalized finite different methods, the strong
form instead of the weak form of the PDE must be used. The partition-of-unity
FEM and other similar generalizations often incur complexities in
terms of imposing essential boundary conditions and/or integrating
the stiffness matrix \cite{melenk1996partition}. Therefore, it remains
an open problem to develop a numerical method that overcomes the element-quality
dependence, while preserving the theoretical framework of FEM, without
complicating the imposition of boundary conditions and numerical integration.

This paper introduces a new method, called the \emph{Adaptive Extended
Stencil Finite Element Method} (\emph{AES-FEM}) (pronounced as ace-F-E-M),
to address this open problem. Similar to some of the aforementioned
methods, the AES-FEM replaces the piecewise-polynomial Lagrange basis
functions in the classical FEM with alternative basis functions. Different
from those methods, our basis functions are partition-of-unity polynomial
basis functions, constructed based on local weighted least squares
approximations, over an adaptively selected stencil to ensure stability.
We refer to these basis functions as \emph{generalized Lagrange polynomial}
(\emph{GLP}) basis functions. Another difference of AES-FEM from most
other generalizations of FEM is that AES-FEM preserves the traditional
finite element shape functions as the weight functions (a.k.a. test
functions) in the weak form, to preserve the compact support of integration
and the weak form after integration by parts. This combination of
the basis and weight functions enables AES-FEM to overcome the element-quality
dependence, while preserving the theoretical framework of FEM, without
any complication in imposing essential boundary conditions or integrating
the stiffness matrix. In addition, the resulting stiffness matrix
of AES-FEM has virtually the same sparsity pattern as that of the
classical FEM, while allowing the use of higher-degree polynomials
and hence significantly improved accuracy.

As a general method, AES-FEM allows polynomial basis functions of
arbitrary degrees. In this paper, we focus on the use of quadratic
polynomial basis functions, for which the stiffness matrix has virtually
the same sparsity pattern as the classical FEM with linear basis functions.
However, AES-FEM is based on the more general weighted-residual formulation
instead of the Galerkin formulation, and hence the resulting system
is nonsymmetric, which is more expensive to solve than for symmetric
matrices. In addition, it is more expensive to construct the basis
functions of AES-FEM than to use the standard basis functions in FEM.
Therefore, AES-FEM is conceivably less efficient than FEM for a given
mesh. However, as we will demonstrate in our experimental results,
AES-FEM is significantly more accurate than FEM on a given mesh due
to its use of higher-degree basis functions, and it is also more efficient
than FEM in terms of error versus runtime. Most impotently, AES-FEM
enables better stability and faster convergence of iterative solvers
than FEM over poor-quality meshes.

The reminder of the paper is organized as follows. In Section~\ref{sec:Background-and-Related},
we present some background information and recent developments of
related methods. In Section~\ref{sec:Formulation-of-AES-FEM}, we
formulate FEM from a weighted residual perspective, describe the construction
of generalized Lagrange polynomial basis functions based on weighted
least squares approximations, and then introduce AES-FEM. In Section~\ref{sec:Analysis-of-AES-FEM},
we analyze the consistency and stability of AES-FEM. In Section~\ref{sec:Implementation},
we discuss some implementation details including the utilized data
structure and the applicable algorithms. In Section~\ref{sec:Results},
we present the results of some numerical experiments with our approach.
Finally, Section~\ref{sec:Conclusions-and-Future} concludes the
paper with a discussion.

\section{Background and Related Work\label{sec:Background-and-Related}}

In this section, we review some background information and some methods
closely related to our proposed approach, including the diffuse element
method and element free Galerkin method, some modern variants or generalizations
of FEM, as well as the generalized finite different methods.

\subsection{Diffuse Element Method and Element Free Galerkin Method}

Various alternatives of finite element methods have been proposed
in the literature to mitigate the mesh-quality dependence. One of
the examples is the \emph{diffuse element method} (DEM) \cite{nayroles1992generalizing},
proposed by Nayroles, Touzot, and Villon in 1992. Similar to AES-FEM,
DEM constructs local approximations to an unknown function based on
a local weighted least squares fitting at each node. However, unlike
AES-FEM, the DEM is based on the Galerkin formulation, which requires
the shape functions to have a compact support for efficiency. To this
end, DEM relies on a weight function that vanishes at a certain distance
from a node, in a manner similar to the moving least squares fittings
\cite{lancaster1981surfaces}. The accuracy and efficiency of numerical
integration in DEM depends on the particular choice of the weight
function. In contrast, based on a weighted-residual formulation, AES-FEM
enforces the compact support of the weak form with the weight functions,
so the shape function does not need to have a compact support. 

Another approach, which is closely related to DEM, is the \emph{element-free
Galerkin method} (EFGM) \cite{Bely1994}, proposed by Belytschko,
Lu, and Gu in 1994. As a Galerkin method, EFGM also requires a compact
support of its shape functions, which serve as both the trial functions
and weight functions. Similar to DEM, EFGM constructs the shape functions
based on moving least squares, for which the weight function plays
a critical role in terms of accuracy and efficiency. However, depending
on the weight functions, the shape functions in EFGM may not be polynomials.
It requires special quadrature rules with more quadrature points than
those of standard FEM \cite{Bely1994}, and it also requires evaluating
a moving least squares fitting at each quadrature point. In addition,
EFGM requires the use of Lagrange multipliers for essential boundary
conditions. In contrast, AES-FEM can utilize the same treatments of
boundary conditions and numerical integration as the standard FEM.

\subsection{Other Variants and Generalizations of FEM}

Besides DEM and EFGM, various other generalizations of FEM have been
developed in recent years. Some of the most notable ones include the
generalized finite element method (GFEM) and extended finite element
method (XFEM) \cite{Belytschko2009,duarte1996h,Duarte2000GFEM,srinivasan2008generalized,Fries2010XFEM}.
These methods also alleviate the dependence on the mesh, by introducing
enrichment functions to replace the standard FEM basis functions in
regions with discontinuous solutions, such as along cracks. These
enrichment functions in general are not polynomials, so special quadrature
rules may be needed for efficiency. Away from the discontinuities,
GFEM and XFEM rely on the standard FEM discretizations, so good mesh
quality is still required in general.

The GFEM and XFEM may be viewed as special cases of the partition
of unity method (PUM) \cite{melenk1996partition}, which is a general
framework for constructing alternative basis functions. As noted in
\cite{melenk1996partition}, the main outstanding questions of PUM
include the choice of the basis functions, the imposition of essential
boundary conditions, and efficient numerical integration. The AES-FEM
proposes a set of generalized Lagrange polynomial basis functions
that also satisfy the partition of unity. Therefore, AES-FEM effectively
addresses these open problems in PUM.

Besides the aforementioned generalizations of FEM, it is also worth
noting the least-squares finite element method (LSFEM) \cite{bochev1998finite}.
LSFEM uses the concept of least squares globally to minimize a global
error. In contrast, AES-FEM uses least squares in a local sense for
constructing basis functions. To use LSFEM, any higher order PDE must
be decomposed into a system of first order PDEs first, so it does
not preserve the framework of the standard FEM. 

Finally, we note the recent development of isogeometric analysis (IGA)
\cite{hughes2005isogeometric}, which uses NURBS (Non-Uniform Rational
B-Splines) or T-splines as basis functions instead of the standard
FEM basis functions. These methods can deliver high accuracy over
very coarse meshes and can be advantageous for problems that can benefit
from high-degree continuity, such as thin-shell modeling. However,
IGA does not alleviate the dependency on mesh quality, since NURBS
in effect impose stronger requirement on mesh quality than the standard
FEM.

\subsection{Generalized Finite Difference and Weighted Least Squares}

The finite difference methods and finite element methods are closely
related to each other. On structured meshes, the equivalence of these
methods can be established in some special cases. While finite element
methods were developed to support unstructured meshes from the beginning,
the finite difference methods can also be generalized to unstructured
meshes. These generalizations are often collectively referred to as
\emph{generalized finite difference }(or \emph{GFD}) methods. The
earlier GFD methods were based on polynomial interpolation; see e.g.,
\cite{Forsythe1960Finite}, \cite{Jensen1972}, and \cite{DKL84CSCR}.
These methods construct a local multivariate polynomial interpolation
by requiring the number of points in the stencil to be equal to the
number of coefficients in the interpolant. However, due to the irregular
distribution of points, the resulting Vandermonde matrices are often
singular or ill-conditioned. 

More general than an interpolant are the least squares or weighted
least squares approximations, which allow more points in the stencil
than the number of coefficients of a polynomial. Some earlier examples
of least-squares approximations include \cite{Perrone1975} and \cite{Liszka1980},
which attempted to improve the conditioning of interpolation-based
GFD. More recently, the least-squares-based GFD have been analyzed
more systematically by Benito et al. \cite{Benito2001,Gavete2003},
and it been successfully applied to the solutions of parabolic and
hyperbolic PDEs \cite{Benito2001,Benito2007,Gavete2003} and of advection-diffusion
equation \cite{Prieto2011}. It has also been utilized in the weak-form
of the Poisson equation, under the name \emph{meshless finite difference
method} \cite{Milewski2012,Milewski2013}. In these methods, a weighting
scheme is often used to alter the norm that is being minimized, and
hence the name \emph{weighted least squares} (WLS). These weights
serve a different role from those in the moving least squares \cite{lancaster1981surfaces},
DEM, and EFGM. They do not need to have a compact support, and do
not even to be defined by a continuous function in general.

Even though the least-squares approximations tend to be better conditioned
than their interpolation-based counterparts, ill-conditioning may
still occur for a given set of points. To overcome the issue, Jiao
et al. utilized adaptive stencils coupled with column and row scaling,
QR factorization with column pivoting, and a condition number estimator,
which effectively guarantee the conditioning and hence the stability
of approximations based on WLS \cite{Jiao2008,Wang2009}. In AES-FEM,
we extend this previous work to construct the generalized Lagrange
polynomial basis functions, and then utilize these basis functions
in the weak form of the finite element methods.

\section{Formulation of AES-FEM \label{sec:Formulation-of-AES-FEM}}

The main idea of AES-FEM is the use of an alternative set of basis
functions. For the basis functions, we propose the use of a set of
generalized Lagrange polynomial basis functions (GLPBF) computed using
a weighted least squares formulation. We use the standard FEM (hat)
basis functions for the weight functions (a.k.a. test functions).
In this section, we describe the weighted residual formulation of
FEM, define generalized Lagrange polynomial basis functions based
on weighted least squares, and then describe AES-FEM.

\subsection{Weighted Residual Formulation of FEM }

We will approach the formulation of the finite element method through
the lens of the weighted residual method for solving PDEs. The main
idea of the formulation is to consider the unknown solution as a linear
combination of basis (trial) functions and then select the coefficients
such that the residual is orthogonal to a set of weight (test) functions.
Depending on the choice of the weight functions, one will derive different
numerical methods, such as the collocation method, the least squares
method, and the Galerkin method. Details about weighted residual and
FEM can be found in \cite{brenner2008mathematical,ciarlet2002finite,finlayson1973method}.
In the following, we give a brief overview of weighted residuals for
completeness.

Consider a linear differential operator $\mathcal{L}$ defined on
a bounded, simply-connected domain $\Omega$, with outward unit normal
vector $\vec{n}$. Denote the boundary of $\Omega$ as $\Gamma=\Gamma_{D}\cup\Gamma_{N}$,
where $\Gamma_{D}$ and $\Gamma_{N}$ are disjoint sets on which Dirichlet
and Neumann boundary conditions are specified, respectively. We want
to find a function $u$ such that
\begin{equation}
\mathcal{L}u=f\label{eq:strong_form}
\end{equation}
subject to the boundary conditions
\begin{equation}
u=g\mbox{ on }\Gamma_{D}\quad\mbox{ and }\quad\frac{\partial u}{\partial\vec{n}}=h\text{ on }\Gamma_{N}.\label{eq:boundcond}
\end{equation}
Eq.~(\ref{eq:strong_form}) is the strong form of the PDE. In the
weighted residual formulation, we use the weak form based on a set
of weight functions $\Psi=\left\{ \psi_{1},\dots,\psi_{n}\right\} $,
by requiring the residual $\mathcal{L}u-f$ to be orthogonal to $\psi_{i}$,
i.e., 
\begin{equation}
\int_{\Omega}\psi_{i}\left(\mathcal{L}u-f\right)\,dV=0.\label{eq:weak_form}
\end{equation}
To approximate $u$, let $\Phi=\left\{ \phi_{1},\dots,\phi_{n}\right\} $
be a set of basis functions, and we define an approximation
\begin{equation}
u\approx\sum_{j=1}^{n}u_{j}\phi_{j}.\label{eq:approximation}
\end{equation}
Substituting (\ref{eq:approximation}) into the weak form (\ref{eq:weak_form})
and rearranging the equations, we then obtain
\begin{equation}
\sum_{j=1}^{n}u_{j}\int_{\Omega}\psi_{i}\left(\mathcal{L}\phi_{j}\right)\,dV=\int_{\Omega}\psi_{i}f\,dV.\label{eq:approx_weakform}
\end{equation}
At this point for simplicity, let us consider the Poisson equation
with Dirichlet boundary conditions, for which the weak form is given
by 
\begin{equation}
\int_{\Omega}\psi_{i}\nabla^{2}u\,dV=\int_{\Omega}\psi_{i}f\,dV.\label{eq:weakform_Poisson}
\end{equation}
Substituting (\ref{eq:approximation}) into (\ref{eq:weakform_Poisson}),
we obtain
\begin{equation}
\sum_{j=1}^{n}u_{j}\int_{\Omega}\psi_{i}\nabla^{2}\phi_{j}\ dV=\int_{\Omega}\psi_{i}f\ dV.\label{eq:weight_residual_Poisson}
\end{equation}
The finite element method uses integration by parts to reduce the
order of derivatives required by (\ref{eq:weight_residual_Poisson}).
If $\psi_{i}$ has weak derivatives and satisfies the condition $\psi_{i}|_{\Gamma_{D}}=0$,
then after integrating by parts and imposing the boundary conditions,
we arrive at
\begin{equation}
-\sum_{j=1}^{n}u_{j}\int_{\Omega}\nabla\psi_{i}\cdot\nabla\phi_{j}\ dV=\int_{\Omega}\psi_{i}f\ dV.\label{eq:approx_weakform_Poisson}
\end{equation}
Taking (\ref{eq:approx_weakform_Poisson}) over the $n$ weight functions,
we obtain the linear system
\begin{equation}
\vec{K}\vec{u}=\vec{g},
\end{equation}
where $\vec{K}$ is the stiffness matrix and $\vec{g}$ is the load
vector, with 
\begin{equation}
k_{ij}=-\int_{\Omega}\nabla\psi_{i}\cdot\nabla\phi_{j}\ dV\qquad\text{and }\qquad g_{i}=\int_{\Omega}\psi_{i}f\ dV.\label{eq:stiffness_entries}
\end{equation}
If the weight functions are chosen to be the same as the basis functions,
then we will arrive at the Galerkin method. In this paper, we introduce
a new set of basis functions based on weighted least squares and we
use the standard linear FEM ``hat functions'' as the weight functions.

\subsection{Weighted Least Squares Approximations \label{sub:GFD_=000026_WLS}}

In this subsection, we review numerical differentiation based on weighted
least squares approximations, as described in \cite{Jiao2008,Wang2009}.
Similar to interpolation-based approximations, this method is based
on Taylor series expansion. Let us take 2D as an example, and suppose
$f(\vec{u})$ is a bivariate function with at least $d+1$ continuous
derivatives in some neighborhood of $\vec{u}_{0}=(0,0)$. Denote $c_{jk}=\frac{\partial^{j+k}}{\partial u^{j}\partial v^{k}}f(\vec{u}_{0})$.
Then for any $\vec{u}$ in the neighborhood, $f$ may be approximated
to the $(d+1)$st order accuracy about the origin $\vec{u}_{0}$ as
\begin{equation}
f(\vec{u})=\underbrace{\sum_{p=0}^{d}\sum_{j,k\geq0}^{j+k=p}c_{jk}\frac{u^{j}v^{k}}{j!k!}}_{\text{Taylor Polynomial}}+\underbrace{\mathcal{O}(\left\Vert \vec{u}\right\Vert ^{d+1})}_{\text{remainder}}.\label{eq:Taylor_GFD_expansion}
\end{equation}
Analogous formulae exist in 1D and 3D. The derivatives of the Taylor
polynomial are the same as those of $f$ at $\vec{u}_{0}$ up to degree
$d$. Therefore, once we have calculated the coefficients for the
Taylor polynomial, finding the derivatives of $f$ at $\vec{u}_{0}$
is trivial. We proceed with calculating the coefficients as follows.

For any point $\vec{u}_{0}$, we select a stencil of $m$ nodes from
the neighborhood around $\vec{u}_{0}$. Stencil selection is described
further in Section~\ref{sec:Implementation}. We do a local parameterization
of the neighborhood so that $\vec{u}_{0}$ is located at the origin
$(0,0)$ and the coordinates of the other points are given relative
to $\vec{u}_{0}$. Then substituting these points into (\ref{eq:Taylor_GFD_expansion}),
we obtain a set of approximate equations 
\begin{equation}
\sum_{p=0}^{d}\sum_{j,k\geq0}^{j+k=p}c_{jk}\frac{u_{i}^{j}v_{i}^{k}}{j!k!}\approx f_{i},
\end{equation}
where $f_{i}=f\left(\vec{u}_{i}\right)$ and the $c_{jk}$ denote
the unknowns, resulting in an $m\times n$ system. There are $n=(d+1)(d+2)/2$
unknowns in 2D and $n=(d+1)(d+2)(d+3)/6$ unknowns in 3D. Let $\vec{V}$
denote the generalized Vandermonde matrix, $\vec{c}$ denote the vector
of unknowns (i.e., the $c_{jk}$) and $\vec{f}$ denote the vector
of function values. Then we arrive at the rectangular system 
\begin{equation}
\vec{V}\vec{c}\approx\vec{f}.\label{eq:Vc_equal_f}
\end{equation}

Let us now introduce some notation to allow us to write the Taylor
series in matrix notation before we proceed with our discussion of
solving (\ref{eq:Vc_equal_f}). Let $\vec{\mathcal{P}}_{k}^{(d)}(\vec{x})$
denote the set of all $k$-dimensional monomials of degree $d$ and
lower, stored in ascending order as a column vector. If no ambiguities
will arise, we will use $\vec{\mathcal{P}}$ in place of $\vec{\mathcal{P}}_{k}^{(d)}(\vec{x})$.
For example, for second degree in 2D we have
\begin{equation}
\vec{\mathcal{P}}_{2}^{(2)}\left(\vec{x}\right)=\left[1\quad x\quad y\quad x^{2}\quad xy\quad y^{2}\right]^{T}.
\end{equation}
Let $\vec{D}$ be a diagonal matrix consisting of the fractional factorial
part of the coefficients, i.e. $\frac{1}{j!k!}$ in (\ref{eq:Taylor_GFD_expansion}).
For example, for second degree in 2D we have
\begin{equation}
\vec{D}=\text{diag}\left(1,\ 1,\ 1,\ \frac{1}{2},\ 1,\ \frac{1}{2}\right).
\end{equation}
 Then we may write the Taylor series as 
\begin{equation}
f(\vec{x})=\vec{c}^{T}\vec{D}\vec{\mathcal{P}}\left(\vec{x}\right).
\end{equation}

To solve (\ref{eq:Vc_equal_f}), we use a weighted linear least squares
formulation \cite{Golub13MC}, that is, we will minimize a weighted
norm (or semi-norm)
\begin{equation}
\min_{\vec{c}}\left\Vert \vec{V}\vec{c}-\vec{f}\right\Vert _{\vec{W}}\equiv\min_{\vec{c}}\left\Vert \vec{W}\left(\vec{V}\vec{c}-\vec{f}\right)\right\Vert _{2},
\end{equation}
where $\vec{W}$ is an $m\times m$ diagonal weighting matrix. The
entries of $\vec{W}$ assign weights to the rows of matrix $\vec{V}$.
Specifically, if we denote the diagonal entries of $\vec{W}$ as $w_{i}$,
then row $i$ is assigned weight $w_{i}$. These weights can be used
to prioritize the points in the system: we assign heavier weights
to the nodes that are closer to the center point. By setting a weight
to zero (or very close to zero), we may also filter out outliers or
other undesirable points. Note that for a given node, the weighting
matrix $\vec{W}$ is constant.

If $\vec{f}$ is in the column space of $\vec{V}$, then the solution
of the linear system is not affected by a nonsingular weighting matrix.
However, if $\vec{f}$ is not in the column space, which is often
the case, then different weighting schemes can lead to different solutions.
Choosing a good weighting matrix is application specific. For quadratic
approximations, we compute the weights as follows. Let $h$ denote
the maximum radius of the neighborhood, that is 
\begin{equation}
h=\max_{1\leq i\leq m}\left\{ \left\Vert \vec{u}_{i}\right\Vert _{2}\right\} .
\end{equation}
Then 
\begin{equation}
w_{i}=\left(\frac{\left\Vert \vec{u}_{i}\right\Vert _{2}}{h}+\epsilon\right)^{-1},
\end{equation}
where $\epsilon$ is a small number, such as $\epsilon=0.01$, for
avoiding division by zero.

After the weighting matrix has been applied, we can denote the new
system as
\begin{equation}
\vec{M}\vec{c}\approx\tilde{\vec{f}},\quad\mbox{where}\quad\vec{M}=\vec{W}\vec{V}\mbox{ and }\tilde{\vec{f}}=\vec{W}\vec{f}.
\end{equation}
This resulting system may be rank-deficient or ill-conditioned. This
is a challenge that GFD researchers have been dealing with since the
1970s \cite{Jensen1972}. The ill-conditioning may arise from a number
of issues including poor scaling, an insufficient number of nodes
in the neighborhood, or a degenerate arrangement of points. We resolve
these issues with neighborhood selection, discussed in Section~\ref{sec:Implementation}.
We can address the scaling issue with the use of a diagonal scaling
matrix $\vec{S}$. Let $\vec{a}_{j}$ denote the $j\text{th}$ column
of an arbitrary matrix $\vec{A}$. A typical choice for the $j\text{th}$
entry of $\vec{S}$ is either $1/\left\Vert \vec{a}_{j}\right\Vert _{2}$,
which approximately minimizes the 2-norm condition number of $\vec{A}\vec{S}$
\cite{Golub13MC}, or $1/\left\Vert \vec{a}_{j}\right\Vert _{\infty}$
\cite{Cazals2005}. Using exact arithmetic, the matrix $\vec{S}$
does not affect the solution, but it can significantly improve the
conditioning and thus the accuracy in the presence of rounding errors.
After applying the scaling matrix to $\vec{W}\vec{V}$, the problem
becomes 
\begin{equation}
\min_{\vec{d}}\left\Vert \tilde{\vec{V}}\vec{d}-\tilde{\vec{f}}\right\Vert _{2},\quad\text{where }\tilde{\vec{V}}\equiv\vec{W}\vec{V}\vec{S}=\vec{M}\vec{S}\text{ and }\vec{d}\equiv\vec{S}^{-1}\vec{c}.\label{eq:WLS}
\end{equation}
Conceptually, the solution to the above problem may be reached through
the use of a pseudoinverse. We will have 
\begin{equation}
\vec{d}=\tilde{\vec{V}}^{+}\tilde{\vec{f}}\quad\text{ where }\tilde{\vec{V}}^{+}\equiv\left(\tilde{\vec{V}}^{T}\tilde{\vec{V}}\right)^{-1}\tilde{\vec{V}}^{T}.\label{eq:coefficient_pseduo_inverse}
\end{equation}
However, since the resulting system may still be rank-deficient or
ill-conditioned, we solve it using QR factorization with column pivoting,
as discussed in Subsection~\ref{sub:Algorithms}. Finally, we get
the vector of partial derivatives for the Taylor polynomial 
\begin{equation}
\vec{c}=\vec{S}\vec{d}.\label{eq:final_equ_for_c_vector}
\end{equation}

\subsection{Generalized Lagrange Polynomial Basis Functions}

We now define basis functions based on weighted least squares. Note
that the standard finite element methods use piecewise Lagrange polynomial
shape functions, which have two especially important properties: the
coefficients of the basis functions have the physical meaning of the
function values or their approximations at the nodes, and the basis
functions form a partition of unity. We refer to the two properties
as \emph{function value as coefficient} and \emph{partition of unity},
respectively. These properties are desirable in ensuring the consistency
of the method based on these basis functions and also for the ease
of imposing Dirichlet boundary conditions. However, the traditional
concept of the Lagrange basis functions is interpolatory, so they
are not applicable to least squares. We now generalize this concept,
so that it can be applicable to least-squares-based basis functions.
\begin{defn}
Given a set of degree-$d$ polynomial basis functions $\left\{ \phi_{i}\right\} $,
we say it is a set of degree-$d$ \emph{generalized Lagrange }\textit{polynomial
(GLP)}\emph{ basis functions} if:\end{defn}
\begin{enumerate}
\item $\sum_{i}f\left(x_{i}\right)\phi_{i}$ approximates a function $f$
to $\mathcal{O}\left(h^{d+1}\right)$ in a neighborhood of the stencil,
where $h$ is some characteristic length measure, and 
\item $\sum_{i}\phi_{i}=1.$
\end{enumerate}
We now define a set of GLP basis functions based on weighted least
squares. 

Given a stencil $\left\{ x_{i}\right\} $, we follow the procedure
in Subsection~\ref{sub:GFD_=000026_WLS}. When computing the $j$th
basis function $\phi_{j}$, let $\vec{f}=\vec{e}_{j}$, where $\vec{e}_{j}$
is the $j$th column of the identity matrix. Following (\ref{eq:coefficient_pseduo_inverse})
and (\ref{eq:final_equ_for_c_vector}), we have
\begin{equation}
\vec{c}=\vec{S}\tilde{\vec{V}}^{+}\vec{W}\vec{e}_{j}.
\end{equation}
Thus for the $j$th basis function, the vector $\vec{c}$ is exactly
the $j$th column of $\vec{S}\tilde{\vec{V}}^{+}\vec{W}$. We define
a set of basis functions as 
\begin{equation}
\vec{\Phi}=\left(\vec{S}\tilde{\vec{V}}^{+}\vec{W}\right)^{T}\vec{D}\vec{\mathcal{P}}.\label{eq:GFD_Definition}
\end{equation}
For a more concrete example, if we denote the elements of $\tilde{\vec{V}}^{+}$
as $a_{ij}$ we can see that the $j$th basis function for degree
2 in 2D is
\begin{equation}
\phi_{j}=w_{j}\left(a_{1j}s_{1}+a_{2j}s_{2}x+a_{3j}s_{3}y+a_{4j}s_{4}\frac{1}{2}x^{2}+a_{5j}s_{5}xy+a_{6j}s_{6}\frac{1}{2}y^{2}\right).
\end{equation}
Note that $w_{j}$ is a constant scalar, so $\phi_{j}$ is a polynomial.
The basis functions in (\ref{eq:GFD_Definition}) are an example of\emph{
}GLP basis functions. We summarize this key feature in the following
theorem.
\begin{thm}
\label{thm:GFD-GLBF}The basis functions based on weighted least squares
as defined in (\ref{eq:GFD_Definition}) are generalized Lagrange
polynomial basis functions.
\end{thm}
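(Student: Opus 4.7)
The plan is to exploit the linearity of the weighted least squares (WLS) solution map $\vec{f}\mapsto \vec{c}=\vec{S}\tilde{\vec{V}}^{+}\vec{W}\vec{f}$, together with the Taylor-based approximation property of WLS developed in Subsection~\ref{sub:GFD_=000026_WLS}. First I would observe that taking $\vec{f}=\vec{e}_{j}$ yields precisely the coefficient vector of $\phi_{j}$ as written in (\ref{eq:GFD_Definition}), so by linearity, for any data $\vec{f}=(f(x_{1}),\dots,f(x_{m}))^{T}$ the WLS polynomial $p(x;\vec{f})\equiv \vec{c}^{T}\vec{D}\vec{\mathcal{P}}(x)$ equals $\sum_{j}f(x_{j})\,\phi_{j}(x)$. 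Thus verifying the two defining properties of a GLP basis reduces to analyzing this single polynomial for two specific choices of $\vec{f}$.

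For property (1), I would apply the Taylor expansion (\ref{eq:Taylor_GFD_expansion}) at $\vec{u}_{0}$ to write $\vec{f}=\vec{V}\vec{c}^{\text{Tay}}+\vec{r}$, where $\vec{c}^{\text{Tay}}$ collects the exact Taylor coefficients of $f$ and $\|\vec{r}\|_{\infty}=\mathcal{O}(h^{d+1})$. Since $\vec{c}^{\text{Tay}}$ is a feasible candidate with WLS residual of this order, and since $\tilde{\vec{V}}$ has full column rank under the adaptive stencil selection discussed in Section~\ref{sec:Implementation}, the minimizer $\vec{c}$ differs from $\vec{c}^{\text{Tay}}$ only through a bounded condition-number factor times this residual. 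Combined with Taylor's theorem applied to the Taylor polynomial itself, this gives $p(x;\vec{f})-f(x)=\mathcal{O}(h^{d+1})$ throughout the stencil neighborhood, which together with the identity from the first paragraph establishes property (1). For property (2), I would specialize to $f\equiv 1$, so $\vec{f}=\vec{1}$. Since the constant $1$ is the first monomial in $\vec{\mathcal{P}}$, the first column of $\vec{V}$ equals $\vec{1}$, and $\vec{c}^{\text{Tay}}=\vec{e}_{1}$ produces an \emph{exact} fit with zero residual. By full column rank of $\tilde{\vec{V}}$, this is the unique WLS minimizer, so $p(x;\vec{1})=\vec{e}_{1}^{T}\vec{D}\vec{\mathcal{P}}(x)=1$, and therefore $\sum_{j}\phi_{j}(x)=p(x;\vec{1})=1$.

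The main obstacle lies in property (1): the WLS minimizer is only guaranteed to make the \emph{residual} small, whereas I need the resulting polynomial to approximate $f$ pointwise in a neighborhood. Converting a residual bound to a coefficient-error bound requires controlling the effective condition number of the scaled/weighted Vandermonde $\tilde{\vec{V}}$, which is exactly why the column scaling $\vec{S}$ and adaptive stencil machinery were introduced. In this proof sketch I would invoke those constructions as a standing hypothesis (well-conditioned $\tilde{\vec{V}}$ of full column rank) rather than reprove them, since the paper defers their justification to Section~\ref{sec:Implementation} and the consistency/stability analysis. Property (2), by contrast, is essentially immediate once linearity and full column rank are in hand.
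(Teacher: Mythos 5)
Your proposal is correct and follows the same overall skeleton as the paper's proof: both reduce the verification of the two GLP properties to the single observation that, by linearity of the map $\vec{f}\mapsto\vec{S}\tilde{\vec{V}}^{+}\vec{W}\vec{f}$, the combination $\sum_{j}f(x_{j})\phi_{j}$ is exactly the WLS polynomial fit of $f$ on the stencil (this is the paper's Lemma~\ref{lem:GFD_coeff_equal_f}). The two places where you diverge are worth noting. For property (1), the paper simply cites Theorem~4 of \cite{Jiao2008} for the $\mathcal{O}(h^{d+1})$ approximation order, whereas you sketch that argument from scratch (Taylor residual $\vec{r}$ with $\Vert\vec{r}\Vert=\mathcal{O}(h^{d+1})$, feasibility of $\vec{c}^{\text{Tay}}$, conditioning of $\tilde{\vec{V}}$). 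Your sketch is sound in outline but elides the one genuinely delicate step --- converting a small weighted residual into a small \emph{pointwise} error of the fitted polynomial, which requires tracking how the column scaling $\vec{S}$ interacts with the $h^{-(j+k)}$ growth of the derivative coefficients; that is precisely the content of the cited theorem, and your standing hypothesis of a well-conditioned $\tilde{\vec{V}}$ matches the hypothesis the paper states explicitly in Lemma~\ref{lem:GFD_coeff_equal_f}. For property (2), your argument (take $f\equiv1$, note $\vec{c}=\vec{e}_{1}$ gives a zero-residual fit, hence is the unique minimizer under full column rank, so the fitted polynomial is identically $1$) is cleaner and more conceptual than the paper's, which explicitly computes the row sums of $\vec{S}\tilde{\vec{V}}^{+}\vec{W}$ via the left-inverse identity $\tilde{\vec{V}}^{+}\tilde{\vec{V}}=\vec{I}$ applied to the first column $s_{1}\vec{w}$; the two are mathematically equivalent (exact reproduction of constants), but yours makes the mechanism more transparent. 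Both versions share the same unstated caveat, namely that rank truncation in the QRCP solve could in principle break the left-inverse property, which the paper handles separately by forbidding permutation of the first column in Subsection~\ref{sub:Stable_Comp_of_GFD}.
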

We shall postpone the proof of this theorem to Section~\ref{sec:Analysis-of-AES-FEM},
where we will also analyze the accuracy and stability of finite element
discretization based on these basis functions. In the following, we
will finish the description of AES-FEM.

\subsection{Description of AES-FEM \label{sub:Description-of-AES-FEM}}

Starting with the weighted residual formulation for FEM from (\ref{eq:approx_weakform_Poisson}),
we propose using GLP basis functions for the basis functions in the
weak form and using the traditional hat functions for the weight functions.
More specifically, for a given node $i$ and its associated weight
function $\psi_{i}$, a specific set of GLP basis functions $\left\{ \phi_{j}\right\} $
is constructed from a weighted stencil $\left(\vec{X}_{i},\vec{w}_{i}\right)$
of $n$ neighboring vertices centered at node $i$. This weight function
$\psi_{i}$ and its associated set of GLP basis functions $\left\{ \phi_{j}\right\} $
are used to compute the $i$th row of the stiffness matrix, as given
by (\ref{eq:stiffness_entries}). Note that, because a different set
of basis functions is associated with each weight function, the basis
functions on a given element differ row-to-row in the stiffness matrix. 

For AES-FEM, when using the 1-ring neighborhood of a vertex as the
stencil, we can use quadratic GLP basis functions, which have the
advantage of decreased dependence on element quality and improved
accuracy over standard finite element with linear basis functions,
while virtually preserving the sparsity pattern of the stiffness matrix,
as we will demonstrate in Section~\ref{sec:Results}. 

In terms of the computation of the load vector, we can use either
the FEM or AES-FEM basis functions. We refer to the AES-FEM with these
two options as ``AES-FEM 1'' and ``AES-FEM 2,'' respectively.
Additional implementation details will be given in Subsection \ref{sub:Algorithms}.

\section{Analysis of AES-FEM\label{sec:Analysis-of-AES-FEM}}

In this section, we analyze the consistency and stability of AES-FEM.
We start by explaining the applicability of Green's identity to the
GLP basis functions. We will then prove that basis functions in (\ref{eq:GFD_Definition})
are GLP basis functions and discuss the consistency and stability
of AES-FEM.

\subsection{Green's Identity and Integration by Parts \label{sub:General-Properties}}

For a given weight function $\psi$, the GLP basis functions are continuously
differentiable everywhere in within the domain of integration. Hence,
in a \emph{variational sense}, the GLP basis functions still allow
one to use Green's identities to formulate a weak form. Let $\phi_{j}$
be any GLP basis function computed from a weighted stencil, and $\psi_{i}$
be a classical FEM shape function with compact support contained within
the set $\Omega$. Therefore, for any partial derivative operation
$\partial$, it follows that 
\begin{equation}
\int_{\Omega}\left(\partial\phi_{j}\right)\psi_{i}\ \mathrm{d}x=-\int_{\Omega}\phi_{j}\left(\partial\psi_{i}\right)\ \mathrm{d}x.
\end{equation}
Because of this property, the weak-form formulation with the GLP basis
functions in AES-FEM is mathematically sound.

In addition, since the finite-element basis functions $\psi_{i}$
vanish along the boundary, after integration by parts, we have the
identities 
\begin{equation}
\int_{\Omega}\psi_{i}\nabla^{2}\phi_{j}\ dV=-\int_{\Omega}\nabla\psi_{i}\cdot\nabla\phi_{j}\ dV.
\end{equation}
Therefore, we can reduce the order of derivatives similar to the classical
FEM, without introducing additional boundary integrals to the computation.

\subsection{Generalized Lagrange Polynomial Basis Functions \label{sub:GFD-are_GL}}

We now show that the basis functions in (\ref{eq:GFD_Definition})
are indeed generalized Lagrange polynomial basis functions, which
follows from the two lemmas below.
\begin{lem}
\label{lem:GFD_coeff_equal_f}Let $\left\{ x_{i}\right\} $ be a stencil
with $m$ nodes and stencil diameter $h$. Let $\left\{ \phi_{j}\right\} $
be the complete set of basis functions of degree up to $d$ as defined
by (\ref{eq:GFD_Definition}) on this stencil. Given an arbitrary
function $f$, define the following approximation of $f$ 
\begin{equation}
f^{h}(x)\equiv\sum_{j=1}^{m}f_{j}\phi_{j}(x)=\vec{f}^{T}\vec{\Phi},
\end{equation}
where $f_{j}=f\left(x_{j}\right)$ and $\vec{f}=\left[f_{1}\ f_{2}\ \dots\ f_{m}\right]^{T}$.
If the rescaled matrix $\tilde{\vec{V}}$ has a bounded condition
number, then $f^{h}$ approximates $f$ to $\mathcal{O}\left(h^{d+1}\right)$.
In addition, given any degree-$k$ differential operator $\mathcal{D}$,
if $f$ is continuously differentiable up to degree $k$, then $\mathcal{D}f^{h}$
approximates $\mathcal{D}f$ to $\mathcal{O}\left(h^{d-k+1}\right)$.\end{lem}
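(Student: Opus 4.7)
The plan is to reduce the lemma to a standard weighted-least-squares error estimate, with the rescaling matrix $\vec{S}$ providing the bookkeeping that converts a fixed bound on $\|\vec{d}-\vec{d}^*\|_2$ into the graded bounds we need for individual Taylor coefficients. First I would introduce the vector $\vec{c}^*$ whose entries are the \emph{exact} Taylor coefficients $c_{jk}=\partial^{j+k}f/\partial u^j\partial v^k$ evaluated at the stencil center; by Taylor's theorem, the residual $\vec{r}:=\vec{f}-\vec{V}\vec{c}^*$ has entries of size $\mathcal{O}(h^{d+1})$ since each stencil node lies within distance $h$ of the center. Writing $\vec{d}^*=\vec{S}^{-1}\vec{c}^*$, this translates into $\tilde{\vec{f}}-\tilde{\vec{V}}\vec{d}^*=\vec{W}\vec{r}$, whose 2-norm is $\mathcal{O}(h^{d+1})$ because the weights are bounded.

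Next I would invoke the WLS solution formula $\vec{d}=\tilde{\vec{V}}^+\tilde{\vec{f}}$, which gives
\begin{equation}
\vec{d}-\vec{d}^*=\tilde{\vec{V}}^+(\tilde{\vec{f}}-\tilde{\vec{V}}\vec{d}^*)=\tilde{\vec{V}}^+\vec{W}\vec{r}.
\end{equation}
Under the hypothesis that $\tilde{\vec{V}}$ has a bounded condition number, $\|\tilde{\vec{V}}^+\|_2$ is bounded (since the column-scaling sets $\|\tilde{\vec{V}}\|_2=\mathcal{O}(1)$), so $\|\vec{d}-\vec{d}^*\|_2=\mathcal{O}(h^{d+1})$. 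The key observation is now the scaling step: because column $(j,k)$ of $\vec{V}$ consists of entries $u_i^j v_i^k/(j!k!)$ with magnitude $\mathcal{O}(h^{j+k})$, the corresponding diagonal entry of $\vec{S}$ scales like $h^{-(j+k)}$. Hence, for the coefficient of degree $p=j+k$, we obtain $|c_{jk}-c_{jk}^*|=|S_{(jk),(jk)}|\cdot|d_{jk}-d_{jk}^*|=\mathcal{O}(h^{d+1-p})$.

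Finally I would assemble the approximation error. Since $f^h(\vec{x})=\vec{c}^T\vec{D}\vec{\mathcal{P}}(\vec{x})$, the difference from the true Taylor polynomial of $f$ about $\vec{u}_0$ splits into a Taylor remainder, which is $\mathcal{O}(\|\vec{x}\|^{d+1})=\mathcal{O}(h^{d+1})$ in the stencil neighborhood, plus the coefficient-mismatch contributions. A monomial of degree $p$ contributes a term of size $\mathcal{O}(h^p)\cdot|c-c^*|=\mathcal{O}(h^{d+1})$, and summing over $p=0,\dots,d$ yields the first claim. For the derivative estimate, applying a degree-$k$ operator $\mathcal{D}$ to the degree-$p$ monomial leaves something of size $\mathcal{O}(h^{p-k})$ when $p\geq k$ (and zero otherwise), so each term contributes $\mathcal{O}(h^{d+1-k})$; the Taylor remainder for $\mathcal{D}f$ is also $\mathcal{O}(h^{d+1-k})$, giving the second claim.

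The main obstacle I expect is the careful tracking of how the column-scaling $\vec{S}$ interacts with the condition-number hypothesis: one must verify that the natural choice $S_{jj}\sim h^{-p}$ makes $\tilde{\vec{V}}$'s condition number depend only on the \emph{geometric} arrangement of stencil points (not on $h$), so that the pseudoinverse norm stays $\mathcal{O}(1)$ as the mesh is refined. Once that scaling invariance is established, the remaining steps are essentially a bookkeeping argument combining Taylor's theorem with linearity of the pseudoinverse.
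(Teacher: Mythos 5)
Your proposal is correct in outline but takes a genuinely different route from the paper's. The paper's own proof consists of exactly two steps: (i) an explicit linearity argument showing that $\sum_{j}f_{j}\phi_{j}$ coincides with the weighted-least-squares polynomial fit of $f$ on the stencil, i.e., $f^{h}(\vec{x})=\vec{c}^{T}\vec{D}\vec{\mathcal{P}}(\vec{x})$ with $\vec{c}=\vec{S}\tilde{\vec{V}}^{+}\vec{W}\vec{f}$; and (ii) a citation of Theorem~4 of \cite{Jiao2008} for both error estimates. You take step (i) essentially for granted (it is the identity you invoke in your final paragraph, and it does follow from linearity of the construction, since $\phi_{j}$ is generated from $\vec{e}_{j}$), and you instead spend your effort reproving the cited theorem from scratch: Taylor residual of size $\mathcal{O}(h^{d+1})$ at the stencil nodes, perturbation of the least-squares solution through the bounded pseudoinverse, and graded recovery of the individual coefficients via the scaling matrix. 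This buys a self-contained argument where the paper offers only a pointer to prior work, and it is the standard way to prove such WLS estimates.

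The one incomplete point is the one you flag yourself, and it deserves to be named precisely: you need $S_{(jk)}=\mathcal{O}\bigl(h^{-(j+k)}\bigr)$, which is equivalent to a \emph{lower} bound of order $h^{j+k}$ on the norm of the corresponding column of $\vec{W}\vec{V}$. The observation that the entries have magnitude $\mathcal{O}(h^{j+k})$ gives only the upper bound on the column norm, hence only $S_{(jk)}\geq c\,h^{-(j+k)}$ --- the wrong direction for your estimate $|c_{jk}-c_{jk}^{*}|\leq S_{(jk)}\,\mathcal{O}(h^{d+1})$. Moreover, this lower bound is a geometric non-degeneracy condition on the stencil that does not follow from the bounded condition number of the rescaled matrix $\tilde{\vec{V}}$ alone: a stencil nearly collapsed onto a line can still produce a well-conditioned $\tilde{\vec{V}}$ after column normalization while some column of $\vec{W}\vec{V}$ has norm $o(h^{j+k})$, and in that situation the corresponding derivative genuinely cannot be recovered to the claimed order. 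To close the argument you must either add this quasi-uniformity hypothesis explicitly (which is in effect what the theorem cited by the paper carries with it) or prove directly that $\sup_{\vec{x}}\bigl\Vert\vec{S}\vec{D}\vec{\mathcal{P}}(\vec{x})\bigr\Vert=\mathcal{O}(1)$ over the stencil neighborhood. Everything else in your argument is sound.
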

\begin{proof}
First, we show that the approximation $f^{h}$ is equivalent to directly
solving a weighted least squares problem for a local polynomial fitting
of $f$. Using the method described in Subsection~\ref{sub:GFD_=000026_WLS},
we get the coefficients $\vec{c}=\vec{S}\tilde{\vec{V}}\ ^{+}\vec{W}\vec{g}$
where $\vec{g}=\left[f_{1}\ f_{2}\ \dots\ f_{m}\right]^{T}$. Thus,
the local polynomial fitting of $f$ is 
\begin{align*}
f\left(\vec{x}\right) & \approx\vec{c}^{T}\vec{D}\vec{\mathcal{P}}\\
 & =\left(\vec{S}\tilde{\vec{V}}^{+}\vec{W}\vec{g}\right)^{T}\vec{D}\vec{\mathcal{P}}\\
 & =\vec{g}^{T}\left(\vec{S}\tilde{\vec{V}}^{+}\vec{W}\right)^{T}\vec{D}\vec{\mathcal{P}}\\
 & =\vec{g}^{T}\vec{\Phi}\\
 & =f^{h}\left(\vec{x}\right).
\end{align*}
It follows from Theorem~4 in \cite{Jiao2008} that $f^{h}$ approximates
$f$ to $\mathcal{O}\left(h^{d+1}\right)$, and $\mathcal{D}f^{h}$
approximates $\mathcal{D}f$ to $\mathcal{O}\left(h^{d-k+1}\right)$
for any degree-$k$ differential operator $\mathcal{D}$.\end{proof}
\begin{lem}
The basis functions in (\ref{eq:GFD_Definition}) form a partition
of unity, i.e., $\sum_{j=1}^{m}\phi_{j}(x)=1$.\end{lem}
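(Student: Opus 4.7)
The plan is to reduce this partition-of-unity claim to the statement that the weighted least squares fit exactly reproduces the constant function, and then invoke Lemma~\ref{lem:GFD_coeff_equal_f}. Applying that lemma to the specific function $f\equiv 1$ gives $f_j=1$ for every node in the stencil, so the approximant becomes $f^h(x)=\vec{1}^T\vec{\Phi}(x)=\sum_{j=1}^m\phi_j(x)$. Hence the identity $\sum_j\phi_j(x)=1$ is equivalent to showing $f^h\equiv 1$, and the whole proof reduces to verifying that the WLS construction in Subsection~\ref{sub:GFD_=000026_WLS} returns the polynomial $p\equiv 1$ when fed constant data.

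First I would observe that the constant function already lies in the polynomial fitting space. Indeed, because $\vec{\mathcal{P}}$ is ordered with the constant monomial first and the corresponding entry of $\vec{D}$ is $1/0!0!=1$, the first column of the generalized Vandermonde matrix $\vec{V}$ evaluates to the all-ones vector, that is $\vec{V}\vec{e}_1=\vec{1}$. For $f\equiv 1$ this gives $\vec{f}=\vec{1}=\vec{V}\vec{e}_1$, placing $\vec{f}$ squarely in the column space of $\vec{V}$, so the weighted residual can be driven to zero.

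Next I would trace this through the formula $\vec{c}=\vec{S}\tilde{\vec{V}}^{+}\vec{W}\vec{f}$ from \eqref{eq:coefficient_pseduo_inverse}--\eqref{eq:final_equ_for_c_vector}. Using $\tilde{\vec{V}}=\vec{W}\vec{V}\vec{S}$ we get $\vec{W}\vec{f}=\vec{W}\vec{V}\vec{e}_1=\tilde{\vec{V}}\vec{S}^{-1}\vec{e}_1$. Under the standing assumption (inherited from Lemma~\ref{lem:GFD_coeff_equal_f}) that $\tilde{\vec{V}}$ has a bounded condition number, $\tilde{\vec{V}}$ has full column rank, so $\tilde{\vec{V}}^{+}\tilde{\vec{V}}=\vec{I}$ and therefore $\vec{c}=\vec{S}\vec{S}^{-1}\vec{e}_1=\vec{e}_1$. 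It then follows that
\[
f^h(x)=\vec{c}^{T}\vec{D}\vec{\mathcal{P}}(x)=\vec{e}_{1}^{T}\vec{D}\vec{\mathcal{P}}(x)=1,
\]
which, combined with $f^h(x)=\sum_j\phi_j(x)$, completes the argument.

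The only genuinely delicate step is the invocation of $\tilde{\vec{V}}^{+}\tilde{\vec{V}}=\vec{I}$, since without the full-column-rank hypothesis the pseudoinverse would only project onto the column space and one would have to argue separately that $\vec{e}_1$ is the minimum-norm representer. Everything else is pure bookkeeping about how the constant column of $\vec{V}$ aligns with the first entries of $\vec{D}$ and $\vec{\mathcal{P}}$; no estimates or limits are involved because the identity $f^h\equiv 1$ is exact rather than asymptotic.
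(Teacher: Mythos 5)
Your proof is correct and follows essentially the same route as the paper's: both arguments rest on the all-ones first column of $\vec{V}$ and the left-inverse identity $\tilde{\vec{V}}^{+}\tilde{\vec{V}}=\vec{I}$, and your computation $\vec{c}=\vec{S}\tilde{\vec{V}}^{+}\vec{W}\vec{1}=\vec{e}_{1}$ is precisely the paper's row-sum calculation for $\vec{S}\tilde{\vec{V}}^{+}\vec{W}$ written in matrix form. Your framing via exact reproduction of the constant function is a clean repackaging, and you are right to note that only this exact identity (not the asymptotic conclusion of Lemma~\ref{lem:GFD_coeff_equal_f}) does the work, with the full-column-rank assumption on $\tilde{\vec{V}}$ being the one genuinely needed hypothesis in both versions.
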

\begin{proof}
We will show that on a given stencil $\left\{ x_{i}\right\} $, the
GLP basis functions $\left\{ \phi_{j}\right\} $ form a partition
of unity. Let $\vec{V}$ be the generalized Vandermonde matrix for
the given stencil. For example, for second order expansion in 2D,
we have 
\begin{equation}
\vec{V}=\left[\begin{array}{ccccc}
1 & x_{1} & y_{1} & \dots & \frac{1}{2}y_{1}^{2}\\
1 & x_{2} & y_{2} & \dots & \frac{1}{2}y_{2}^{2}\\
\vdots & \vdots & \vdots & \ddots & \vdots\\
1 & x_{m} & y_{m} & \dots & \frac{1}{2}y_{m}^{2}
\end{array}\right].
\end{equation}
For a given function $f$, using the truncated Taylor series, we have
\begin{equation}
\vec{V}\vec{c}=\vec{g},
\end{equation}
where $\vec{c}$ is the vector of partial derivative values. Applying
the diagonal row weighting matrix $\vec{W}$ and the diagonal column
scaling matrix $\vec{S}$, we have
\begin{equation}
\tilde{\vec{V}}\left(\vec{S}^{-1}\vec{c}\right)=\vec{W}\vec{g}\quad\text{where}\quad\tilde{\vec{V}}\equiv\vec{W}\vec{V}\vec{S}.
\end{equation}
This is a least squares problem, and the solution for $\vec{c}$ may
be reached through the use of a pseudoinverse, 
\begin{equation}
\vec{c}=\vec{S}\tilde{\vec{V}}^{+}\vec{W}\vec{g}.
\end{equation}
For the $j$th GLP basis function, we have $\vec{g}_{j}=[0\dots1\dots0]^{T}$,
where the 1 is in the $j$th position, and hence the columns of $\vec{S}\tilde{\vec{V}}^{+}\vec{W}$
multiplied by the Taylor constants $\vec{D}$ give the coefficients
for the GLP basis functions. This implies that the entries of the
$i$th row of $\vec{S}\tilde{\vec{V}}^{+}\vec{W}$ correspond to the
coefficients of the $i$th terms in the set of basis functions.

To finish the proof, it suffices to show that the sum of the entries
in the first row of $\vec{S}\tilde{\vec{V}}^{+}\vec{W}$ is 1, and
the sum of the entries in any other row is 0. Let vector $\vec{w}$
be the diagonal entries of $\vec{W}$. Every entry of the first column
of $\vec{V}$ is equal to 1, thus the first column of $\tilde{\vec{V}}$
is then $s_{1}\vec{w}$. Denote the $i$th row of $\tilde{\vec{V}}^{+}$
as $\tilde{\vec{v}}_{(i,:)}^{+}$. The sum of the entries of the $i$th
row of $\vec{S}\tilde{\vec{V}}^{+}\vec{W}$ is $s_{1}\tilde{\vec{v}}_{(i,:)}^{+T}\vec{w}$.
Since $\tilde{\vec{V}}^{+}$ is a left inverse of $\tilde{\vec{V}}$,
we have $\tilde{\vec{V}}^{+}\tilde{\vec{V}}=\vec{I}$, and hence 
\begin{equation}
s_{1}\tilde{\vec{v}}_{(i,:)}^{+T}\vec{w}=\begin{cases}
1 & i=1\\
0 & 2\leq i\leq n
\end{cases}.
\end{equation}
Therefore, the GLP basis functions form a partition of unity.
\end{proof}
From the above lemmas, the basis functions in (\ref{eq:GFD_Definition})
satisfy both the properties of function value as coefficient and partition
of unity, and hence they are GLP basis functions, as claimed in Theorem~\ref{thm:GFD-GLBF}.

\subsection{Consistency of AES-FEM}

The accuracy of the AES-FEM depends on its consistency and stability.
We first consider the consistency of AES-FEM, in terms of its truncation
errors in approximating the weak form (\ref{eq:weak_form}) by (\ref{eq:approx_weakform})
using the GLP basis functions. In a nutshell, the consistency of the
AES-FEM follows directly from Lemma~\ref{lem:GFD_coeff_equal_f}.
For completeness, we consider a specific example of solving the Poisson
equation. The analysis for other PDEs can be derived in a similar
fashion.
\begin{thm}
Suppose $u$ is smooth and thus $\Vert\nabla u\Vert$ is bounded.
Then, when solving the Poisson equation using AES-FEM with degree-$d$
GLP basis functions in (\ref{eq:approx_weakform_Poisson}), for each
$\psi_{i}$ the weak form (\ref{eq:weakform_Poisson}) is approximated
to $\mathcal{O}(h^{d})$, where $h$ is some characteristic length
measure of the mesh. \end{thm}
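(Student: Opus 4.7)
My plan is to interpret the theorem as a statement about the truncation (consistency) error obtained when the exact nodal values $u_j = u(x_j)$ are substituted into the discrete weak form (\ref{eq:approx_weakform_Poisson}), and then to reduce that truncation error to the gradient approximation error controlled by Lemma~\ref{lem:GFD_coeff_equal_f}.

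First I would define, for each test function $\psi_i$, the residual
\begin{equation*}
\tau_i \;\equiv\; -\sum_{j=1}^{n} u(x_j)\int_{\Omega}\nabla\psi_i\cdot\nabla\phi_j\,dV \;-\; \int_{\Omega}\psi_i f\,dV.
\end{equation*}
Because the true solution satisfies $\nabla^2 u = f$ and $\psi_i$ is a classical FEM hat function vanishing on $\partial\Omega$, Green's identity (whose applicability in our mixed GLP/hat setting was justified in Subsection~\ref{sub:General-Properties}) gives $\int_{\Omega}\psi_i f\,dV = -\int_{\Omega}\nabla\psi_i\cdot\nabla u\,dV$. Pulling the finite sum inside the integral and writing $u^h(x)\equiv\sum_j u(x_j)\phi_j(x)$ for the GLP reconstruction of $u$, the residual collapses to
\begin{equation*}
\tau_i \;=\; \int_{\Omega}\nabla\psi_i\cdot\bigl(\nabla u - \nabla u^h\bigr)\,dV,
\end{equation*}
isolating the effect of replacing $u$ by its local GLP approximation.

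Next I would apply Lemma~\ref{lem:GFD_coeff_equal_f} componentwise with the first-order operator $\mathcal{D}=\nabla$, so that $k=1$ and the lemma's accuracy is $\mathcal{O}(h^{d-k+1})=\mathcal{O}(h^d)$. The smoothness of $u$ and the boundedness of $\|\nabla u\|$ assumed in the theorem supply exactly the regularity needed to invoke the lemma, while the adaptive stencil construction of Section~\ref{sec:Implementation} supplies the bounded-condition-number hypothesis on $\tilde{\vec{V}}$. This yields the uniform pointwise estimate $\|\nabla u - \nabla u^h\|_{\infty}=\mathcal{O}(h^d)$ on the stencil neighborhood centered at node~$i$. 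Substituting this bound into the integrand for $\tau_i$ and using the compact support of $\psi_i$ together with the boundedness of $\nabla\psi_i$ allows the $\mathcal{O}(h^d)$ factor to be extracted from the integral, giving $|\tau_i|=\mathcal{O}(h^d)$ as claimed. The argument extends routinely to the more general equation (\ref{eq:approx_weakform}) by replacing $\nabla$ with the operator appearing in $\mathcal{L}$ and adjusting $k$ accordingly in the lemma.

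The main obstacle I anticipate is the geometric compatibility between $\mathrm{supp}(\psi_i)$ and the stencil on which the GLP basis $\{\phi_j\}$ is built around node~$i$: Lemma~\ref{lem:GFD_coeff_equal_f} only delivers a uniform $\mathcal{O}(h^d)$ gradient bound inside the stencil neighborhood, so one must verify (or enforce through the 1-ring stencil choice of Subsection~\ref{sub:Description-of-AES-FEM}) that $\mathrm{supp}(\psi_i)$ lies within that neighborhood, otherwise the pointwise estimate cannot legitimately be transported under the integral sign. Once this containment is secured, the remainder of the argument reduces to absorbing $\mathcal{O}(1)$ factors coming from $\nabla\psi_i$, $\nabla u$, and the local volume of the support into the implicit constant.
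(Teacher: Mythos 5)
Your proposal is correct and follows essentially the same route as the paper's own proof: integrate by parts using the vanishing of the hat function $\psi_i$ on the boundary, reduce the residual to $\int_{\Omega}\nabla\psi_i\cdot(\nabla u-\nabla u^h)\,dV$, and invoke Lemma~\ref{lem:GFD_coeff_equal_f} with a first-order operator ($k=1$) to obtain the $\mathcal{O}(h^{d})$ bound. Your explicit residual bookkeeping and the remark on the containment of $\mathrm{supp}(\psi_i)$ in the stencil neighborhood are slightly more careful than the paper's version, which additionally records (but does not need for the stated claim) that the load-vector term $f-\tilde{f}$ is $\mathcal{O}(h^{d+1})$ when $f$ is also approximated by GLP basis functions.
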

\begin{proof}
Let $u$ be the exact solution on a mesh with mesh size $h$, and
let 
\[
\tilde{u}=\sum_{j=1}^{n}u_{i}\phi_{j}
\]
denote the approximation to $u$ using degree-$d$ GLP basis functions.
Because the test functions $\psi_{i}$ vanishes along boundary, the
weak form (\ref{eq:weakform_Poisson}) can be rewritten as 
\[
-\int_{\Omega}\nabla u\cdot\nabla\psi_{i}\,dV=\int_{\Omega}\psi_{i}f\,dV.
\]
When using degree-$d$ GLP basis functions, it follows from Lemma~\ref{lem:GFD_coeff_equal_f}
that 
\[
\Vert\nabla u-\nabla\tilde{u}\Vert=\mathbb{\mathcal{O}}\left(h^{d}\right)
\]
within each element. Under the assumption that $u$ is twice differentiable,
$\nabla u$ is bounded, and hence
\[
\left\vert \int_{\Omega}\left(\nabla u-\nabla\tilde{u}\right)\cdot\nabla\psi_{i}\ dV\right\vert =\mathbb{\mathcal{O}}\left(h^{d}\right).
\]
Furthermore, if $f$ is approximated by degree-$d$ GLP basis functions
as 
\[
\tilde{f}=\sum_{j=1}^{n}f_{i}\phi_{j},
\]
then we have
\[
\left\vert \int_{\Omega}\left(f-\tilde{f}\right)\psi_{i}\ dV\right\vert =\mathbb{\mathcal{O}}\left(h^{d+1}\right).
\]

\end{proof}
More specifically, when using quadratic GLP basis functions, the truncation
errors are second order in the stiffness matrix. The truncation errors
in the load vector is third order for AES-FEM 2 when $f$ is also
approximated using quadratic GLP basis functions, but it is second
order for AES-FEM 1 when $f$ is approximated using linear FEM hat
functions. Like most other PDE methods, as long as the method is stable,
the rounding errors do not dominate the truncation errors, and there
is no systematic cancelation of truncation errors, we expect the solution
to converge at the same rate as the local truncation errors, as we
will demonstrate numerically in Section~\ref{sec:Results}.

\subsection{Stability \label{sub:Stability-and-Accuracy}}

For elliptic PDEs, the stability of a method depends on the condition
number of its coefficient matrix, which can affect the performance
of iterative solvers and the accuracy of the solution. It is well
known that the traditional finite element method may be unstable for
poorly shaped meshes \cite{babuska1976angle}, and some meshless methods
may also suffer from instability when two points nearly coincide.
AES-FEM avoids these potential instability issues. 

As a concrete example, let us consider the Poisson equations with
Dirichlet boundary conditions, whose coefficient matrix is the stiffness
matrix. It is well known that the condition number of the stiffness
matrix is proportional to $h^{-2}$, where $h$ is some characteristic
length of the mesh \cite{Lui12NAPDE}. However, if the condition number
is significantly larger, then the method is said to be \emph{unstable},
which can happen due to various reasons. 

The ill-conditioning of any local stiffness matrix may lead to poor
scaling and in turn ill-conditioning of the global stiffness matrix.
This is owing to the following fact, which is given as Theorem 2.2.26
in \cite{watkins2004fundamentals}.
\begin{prop}
\label{prop:condition-number-estimate}For any matrix $\vec{A}\in\mathbb{R}^{m\times n}$,
$m\geq n$, its condition number in any $p$-norm, denoted by $\kappa_{p}\left(\vec{A}\right)$,
is bounded by the ratio of the largest and smallest column vectors
in p-norm, i.e., 
\begin{align}
\kappa_{p}\left(\vec{A}\right) & \geq\frac{\max_{1\leq i\leq n}\left\Vert \vec{a}_{i}\right\Vert _{p}}{\min_{1\leq j\leq n}\left\Vert \vec{a}_{j}\right\Vert _{p}},\label{eq:cond_est}
\end{align}
where $\vec{a}_{k}$ denotes the $k$th column vector of $\vec{A}$.
\end{prop}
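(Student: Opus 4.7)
The plan is to bound $\|\vec{A}\|_p$ from below by the largest column norm and $\|\vec{A}^+\|_p$ from below by the reciprocal of the smallest column norm, and then multiply these two estimates to get $\kappa_p(\vec{A}) = \|\vec{A}\|_p \|\vec{A}^+\|_p$. The whole argument consists of testing the operator norms against standard basis vectors $\vec{e}_k$, so it is essentially a one-line computation, but it has to be done carefully because of the rank-deficient case.

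First I would observe that for each column index $i$, we have $\vec{A}\vec{e}_i = \vec{a}_i$ with $\|\vec{e}_i\|_p = 1$, so by the definition of the induced operator $p$-norm,
\begin{equation}
\|\vec{A}\|_p \;=\; \max_{\|\vec{x}\|_p=1}\|\vec{A}\vec{x}\|_p \;\geq\; \|\vec{a}_i\|_p,
\end{equation}
and maximizing over $i$ gives $\|\vec{A}\|_p \geq \max_i \|\vec{a}_i\|_p$.

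Next I would handle $\vec{A}^+$. Assuming $\vec{A}$ has full column rank (otherwise $\kappa_p(\vec{A}) = \infty$ and the inequality is vacuously true), the Moore--Penrose pseudoinverse is a left inverse, $\vec{A}^+\vec{A} = \vec{I}$. Applying this to $\vec{e}_j$ gives $\vec{A}^+\vec{a}_j = \vec{e}_j$, so by submultiplicativity of the induced norm,
\begin{equation}
1 = \|\vec{e}_j\|_p = \|\vec{A}^+\vec{a}_j\|_p \;\leq\; \|\vec{A}^+\|_p \, \|\vec{a}_j\|_p,
\end{equation}
hence $\|\vec{A}^+\|_p \geq 1/\|\vec{a}_j\|_p$ for every $j$. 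Choosing $j$ to minimize $\|\vec{a}_j\|_p$ yields $\|\vec{A}^+\|_p \geq 1/\min_j \|\vec{a}_j\|_p$.

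Combining the two bounds gives the claim directly, since $\kappa_p(\vec{A}) = \|\vec{A}\|_p \|\vec{A}^+\|_p$ by definition. There is no real obstacle here; the only subtle point is the rank-deficient case, which I would dispatch with the convention that $\kappa_p(\vec{A}) = \infty$ whenever $\vec{A}$ lacks full column rank (equivalently, $\vec{A}^+$ is not a left inverse), making the inequality trivial in that case. The result is the well-known column-norm lower bound on the condition number stated as Theorem~2.2.26 in \cite{watkins2004fundamentals}.
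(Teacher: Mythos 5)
Your proof is correct; the paper itself gives no proof of this proposition (it is quoted as Theorem~2.2.26 of Watkins), and your argument --- bounding $\left\Vert \vec{A}\right\Vert _{p}$ below by the largest column norm via $\vec{A}\vec{e}_{i}=\vec{a}_{i}$, and $\left\Vert \vec{A}^{+}\right\Vert _{p}$ below by the reciprocal of the smallest column norm via the left-inverse identity $\vec{A}^{+}\vec{a}_{j}=\vec{e}_{j}$ --- is exactly the standard textbook proof of that cited fact. Your handling of the rank-deficient case by the convention $\kappa_{p}\left(\vec{A}\right)=\infty$ is also the right way to dispatch the only degenerate situation.
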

The above fact offers an intuitive explanation of a source of ill-conditioning
in traditional finite element methods due to poorly shaped elements:
poorly shaped elements may lead to unbounded large entries in local
stiffness matrices, so the column norms of the global stiffness matrix
would vary substantially, and in turn the global stiffness matrix
is necessarily ill-conditioned. In the context of AES-FEM, there can
be two potential sources of local instability due to poor scaling.
First, the unnormalized local Vandermonde system given in (\ref{eq:Vc_equal_f})
is in general very poorly scaled. We resolved this by normalizing
the Vandermonde system to avoid poor scaling. Second, the normalized
Vandermonde system may still be ill-conditioned occasionally, when
a stencil is degenerate or nearly degenerate, which could lead to
unbounded large values in the local stiffness matrix. We resolved
this issue by using QR with column pivoting and condition-number estimation. 

Even if the local stiffness matrices are bounded, the global stiffness
matrix may still be ill-conditioned due to linearly dependent rows
or columns. This is the potential source of instability for some meshless
methods when two points nearly coincide; the two points may share
the same stencil and basis functions, so that the rows or columns
corresponding to the two points would be nearly identical. Therefore,
these meshless methods also require good point distributions. In AES-FEM,
we utilize the mesh topology to construct the stencil, as we will
describe in Section \ref{sec:Implementation}. This ensures that no
two vertices share the same stencil unless there are coincident points,
and hence it gives a strong guarantee that the rows in the global
stiffness matrix are linearly independent.

The aforementioned reasons are the most common causes of instability
for solving elliptic PDEs. Another source of instability is a cluster
of coincident points or inverted elements, which rarely happen in
practice, and hence we defer their treatments to future work. As we
will demonstrate numerically in Section~\ref{sec:Results}, by resolving
these instabilities, AES-FEM produces well-conditioned stiffness matrices
for meshes, even with very bad quality elements or point distributions.

\section{Implementation \label{sec:Implementation}}

We discuss the practical aspects of the implementation of AES-FEM
in this section. We start with a discussion of the utilized mesh data
structure and then explain how this enables quick and efficient neighborhood
selection. Finally, the algorithms are presented and runtime is analyzed.

\subsection{Data Structure \label{sub:Data-Structure}}

We use an Array-based Half-Facet (AHF) data structure \cite{Dyedov2014}
to store the mesh information. In a $d$-dimensional mesh, the term
\emph{facet} refers to the $(d-1)$-dimensional mesh entities; that
is, in 2D the facets are the edges, and in 3D the facets are the faces.
The basis for the half-facet data structure is the idea that every
facet in a manifold mesh is made of two half-facets oriented in opposite
directions. We refer to these two half-facets as \emph{sibling half-facets}.
Half-facets on the boundary of the domain have no siblings. In 2D
and 3D, the half-facets are \emph{half-edges} and \emph{half-faces},
respectively. We identify each half-facet by a two tuple: the element
ID and a local facet ID within the element. In 2D, we store the element
connectivity, sibling half-edges, and a mapping from each node to
an incident half-edge. In 3D, we store the element connectivity, sibling
half-faces, and a mapping from each node to an incident half-face.
For an example of a 2D mesh and the associated data structure, see
Figure~\ref{fig:heds_square}. This data structure allows us to do
neighborhood queries for a node in constant time (provided the valance
is bounded). For additional information about the AHF data structure,
see \cite{Dyedov2014}.

\begin{figure}
\begin{centering}
\includegraphics[scale=0.75]{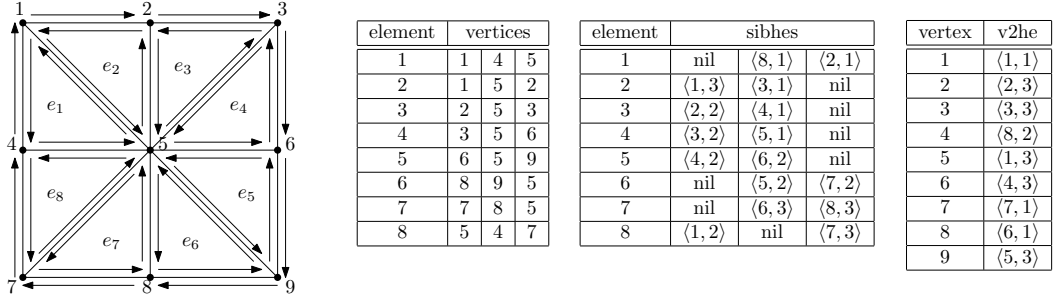}
\par\end{centering}

\caption{An example of half edges and associated data structure.\label{fig:heds_square}}
\end{figure}

\subsection{Neighborhood Selection \label{sub:Neighborhood-Selection}}

The use of the AHF data structure allows us to quickly find the neighborhood
of a node. We use the concept of rings to control the size of the
neighborhood. The \textit{1-ring neighbor elements} of a node are
defined to be the elements incident on the node. The \textit{1-ring
neighborhood} of a node contains the nodes of its 1-ring neighbor
elements \cite{Jiao2008}. Most of the time, when using GFD with second
order basis functions or when constructing second order GLP basis
functions, the 1-ring neighborhood of a node supplies the appropriate
number of nodes. If the valance is low, it might be necessary to further
expand and collect more nodes for the neighborhood. Therefore, for
any integer $k\geq1$, we define the $(k+1)$-\textit{ring neighborhood}
as the nodes in the $k$-ring neighborhood plus their 1-ring neighborhoods. 

As $k$ increases, the average size of the $k$-ring neighborhood
grows very quickly. The granularity can be fine-tuned by using fractional
rings. In 2D we use half rings, which are defined in \cite{Jiao2008};
for any integer $k\geq1$ the $(k+\sfrac{1}{2})$\textit{-ring neighborhood}
is the $k$-ring neighborhood plus the nodes of all the faces that
share an edge with the $k$-ring neighborhood. See Figure~\ref{fig:stencil}
for a visualization of rings and half-rings in 2D. We extend this
definition to 3D and introduce $\sfrac{1}{3}$- and $\sfrac{2}{3}$-rings.
For any integer $k\geq1$, the $\left(k+\sfrac{1}{3}\right)$-\textit{ring
neighborhood} contains the $k$-ring neighborhood plus the nodes of
all elements that share a face with the $k$-ring neighborhood. The
$(k+\sfrac{2}{3})$-\textit{ring neighborhood} contains the $k$-ring
neighborhood plus the nodes of all faces that share an edge with the
$k$-ring neighborhood.

\begin{figure}
\begin{centering}
\includegraphics[scale=0.5]{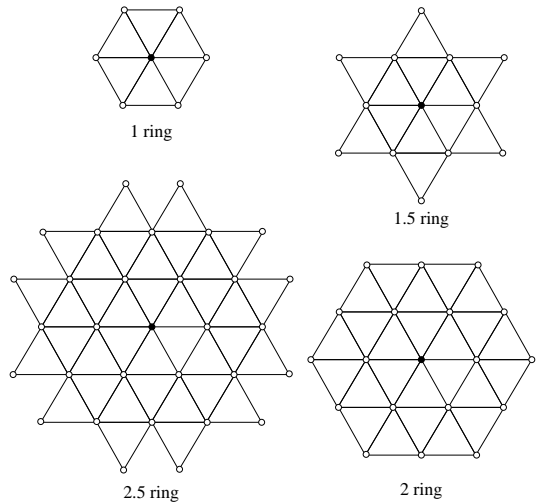}
\par\end{centering}

\caption{Examples of 2D stencils with 1-ring, 1.5-ring, 2-ring, and 2.5-ring
neighborhoods of center node (in solid black).\label{fig:stencil}}
\end{figure}

Note that for 2D triangular and 3D tetrahedral meshes, the 1-ring
neighborhood typically has enough points for constructing quadratic
GLP basis functions. Therefore, the stiffness matrix from AES-FEM
has a similar sparsity pattern to that from standard FEM with linear
shape functions. However, when the 1-ring neighborhood is too small,
the extended stencil with a larger ring allows AES-FEM to overcome
mesh-quality dependence and improve its local stability.

\subsection{Stable Computation of GLP Basis Functions \label{sub:Stable_Comp_of_GFD}}

Even with proper neighborhood selection, it cannot be guaranteed that
the least-squares problem in (\ref{eq:WLS}) will be well-conditioned.
Hence, it is critical to use a robust method that ensures the accuracy
and stability of the approximate solutions. 

Note that one standard technique in linear algebra for solving rank-deficient
least-squares problems is truncated singular value decomposition (TSVD)
\cite{Golub13MC}. The TSVD is not recommended here, because it can
result in the loss of partition of unity of the basis functions. The
reason is as follows. When truncating SVD, one truncates any singular
value $\sigma_{j}$ that is smaller than $\epsilon\sigma_{1}$, where
$\sigma_{1}$ is the largest singular value and $\epsilon$ is some
small positive value, such as $10^{-4}$. These singular values may
be necessary for computing the constant terms in the GLP basis functions,
and hence their loss can result in a set of basis functions that lack
the partition of unity and in turn may compromise convergence.

We avoid the above issue by using truncated QR factorization with
column pivoting (QRCP). When performing QRCP, one can find a numerical
rank $r$ of matrix $\vec{R}$ so that the condition number $\kappa(\vec{R}_{1:r,1:r})<1/\epsilon$,
where $\epsilon$ is some small positive value, such as $10^{-4}$.
This is elaborated in the below discussion of Algorithm~\ref{alg:InitializationCVM}.
If $r$ is less than the size of $\vec{R}$, then any diagonal entry
of $\vec{R}$ in position $r+1$ or greater is truncated, thus truncating
the $(r+1)$th and subsequent columns of $\vec{Q}$. In QRCP, we require
the first column not to be permuted, and this ensures the resulting
basis functions satisfy the property of partition of unity.

In Algorithm~\ref{alg:InitializationCVM}, we present the procedure
for initializing the generalized Vandermonde matrix $\tilde{\vec{V}}$
and factoring it using QRCP. The generalized Vandermonde matrix is
formed from the local coordinates of the stencils and is scaled by
the column scaling matrix $\vec{S}$ and the row scaling matrix $\vec{W}$.
The resulting matrix is then factored using QRCP. We use a variant
of Householder triangularization \cite{Golub13MC} since this procedure
is more efficient and stable than alternatives (such as Gram-Schmidt
orthogonalization). When implementing this procedure, the QR factorization
of $\tilde{\vec{V}}$ can overwrite $\vec{V}$. The $j\text{th}$
Householder reflection vector is of size $n-j+1$. By requiring the
first element of the vector to be positive, the first element may
be reconstructed from the other elements, and thus only $n-j$ entries
are required to store the $j\text{th}$ Householder reflection vector.
The Householder vectors are stored in the lower triangular part of
$\vec{V}$ and the $\vec{R}$ entries are stored in the upper part.
The permutation matrix $\vec{P}$ is stored in a permutation vector.
\begin{algorithm}
\caption{\label{alg:InitializationCVM}Initialization of a Generalized Vandermonde
Matrix}

\textbf{function}: initiate\_GVM

\textbf{input}: 1. $\vec{x}_{k}$: local coordinates of stencil

\hspace{1.2cm}2. $\vec{w}$: vector of row weights

\hspace{1.2cm}3. $p$: desired degree for $\vec{V}$ 

\begin{raggedright}
\hspace{1.2cm}4. $\epsilon$: tolerance for rank deficiency
\par\end{raggedright}

\begin{raggedright}
\textbf{output}: struct gvm: with fields $\vec{W}$, $\vec{S}$, $\vec{Q}$,
$\vec{R}$, $\vec{P}$, $r$ (estimated rank)
\par\end{raggedright}

\begin{algorithmic}[1]

\STATE create generalized Vandermonde matrix $\vec{V}$ from local
coordinates $\vec{x}_{k}$

\STATE determine column scaling matrix $\vec{S}$

\STATE $\vec{W}\leftarrow\text{diag}(\vec{w})$ 

\STATE $\tilde{\vec{V}}\leftarrow\vec{W}\vec{V}\vec{S}$

\STATE solve $\tilde{\vec{V}}\vec{P}=\vec{Q}\vec{R}$

\STATE estimate rank $r$ from $\vec{R}$ so that $r=\max\left\{ i|\text{cond}\left(\vec{R}_{1:i,1:i}\right)\leq1/\epsilon\right\} $

\end{algorithmic}
\end{algorithm}

In addition to computing the QR factorization of the generalized Vandermonde
matrix, an estimation of the numerical rank of $\vec{R}$ is also
computed in Algorithm~\ref{alg:InitializationCVM}. The rank is important
for ensuring the overall stability of other algorithms that use this
initialization step. In order to estimate the numerical rank, we estimate
the condition numbers of the leading principal sub-matrices of $\vec{R}$,
$\vec{R}_{1:r,1:r}$, and find the largest $r$ such that $\tilde{\kappa}(\vec{R}_{1:r,1:r})\leq1/\epsilon$
where $\tilde{\kappa}$ is the estimated condition number and $\epsilon$
is some given drop-off tolerance depending on the degree of polynomials.
 Note that since the matrix $\vec{R}$ is small, the condition numbers
in different norms differ by only a small factor. Therefore for efficiency,
we estimate the condition number of $\vec{R}$ in the 1-norm using
the algorithm described in \cite{higham1987survey}.

Once the generalized Vandermonde matrix has been initialized, it may
be used to construct generalized finite differentiation operators
from the weighted least squares approximations, as described in Algorithm~\ref{alg:Implicit-differentiation}.
The input for this algorithm is the output from Algorithm~\ref{alg:InitializationCVM}
and a vector $\vec{a}$. The vector $\vec{a}=\mathcal{D}\vec{\mathcal{P}}(\vec{x})$
contains the values for some specified derivative $\mathcal{D}$ of
the monomial basis functions $\vec{\mathcal{P}}$ at point $\vec{x}$.
For example, let $\mathcal{D}$ be $\frac{\partial}{\partial y}$.
Then in 2D, we have $\vec{a}=\mathcal{D}\vec{\mathcal{P}}(x,y)=[0\ 0\ 1\ 0\ x\ 2y]^{T}$
and in 3D, we have $\vec{a}=\mathcal{D}\vec{\mathcal{P}}(x,y,z)=[0\ 0\ 1\ 0\ 0\ x\ 0\ 2y\ 0\ 0]^{T}$.
The algorithm returns a vector of weights $\vec{d}$ so that $\vec{d}^{T}\vec{g}=\mathcal{D}f(\vec{x})$
for a vector $\vec{g}=\left[f_{1}\ f_{2}\ \dots\ f_{m}\right]^{T}$
containing the values of the function at the stencil points. Note
that for a GLP basis function, the returned weights are the values
of the specified derivative at the points in the stencil. 

\begin{algorithm}
\caption{\label{alg:Implicit-differentiation}Approximating $\mathcal{D}f$
at given point $\vec{x}$ from WLS}

\begin{raggedright}
\textbf{function} diff\_WLS
\par\end{raggedright}

\begin{raggedright}
\textbf{input}: 1. struct gvm: with fields $\vec{W}$, $\vec{S}$,
$\vec{Q}$, $\vec{R}$, $\vec{P}$, $r$ (estimated rank)
\par\end{raggedright}

\begin{raggedright}
\hspace{1.0cm}2. coefficients $\vec{a}=\mathcal{D}\mathcal{P}(\vec{x})$
\par\end{raggedright}

\begin{raggedright}
\textbf{output}: weights $\vec{d}$, so that $\vec{d}^{T}\vec{g}=\mathcal{D}f(\vec{x})$
for $\vec{g}$ containing $f(\vec{x}_{k})$ at stencil points
\par\end{raggedright}

\begin{algorithmic}[1]

\STATE $\vec{a}\leftarrow\left(\vec{P}_{:,1:r}\right)^{T}\vec{S}^{-1}\vec{a}$;

\STATE $\vec{a}\leftarrow\vec{R}_{1:r,1:r}^{-T}\vec{a}$;

\STATE $\vec{d}\leftarrow\vec{W}\vec{Q}_{:,1:r}\vec{a}$;

\end{algorithmic}
\end{algorithm}

In terms of the computational cost, the step that dominates Algorithm~\ref{alg:InitializationCVM}
is the QRCP factorization, which takes $\mathcal{O}\left(2mn^{2}-\frac{2}{3}n^{3}\right)$
flops where $\tilde{\vec{V}}$ is $m\times n$ \cite{Golub13MC}.
Here, $m$ is the number of points in the stencil and $n$ is the
number of terms in the Taylor series expansion (for second order expansion
$n=6$ in 2D and $n=10$ in 3D). As long as the valance is bounded,
that is $m$ is bounded, this algorithm is executed in a constant
time. Compared to calculations based on the standard finite-element
basis functions, which are tabulated, the computation based on the
GLP basis functions is more expensive. This leads to higher cost of
AES-FEM in assembling the stiffness matrix and load vector, as we
discuss next. However, this cost is a small constant per element,
and AES-FEM can be more efficient overall by delivering higher accuracy,
as we will demonstrate in Section~\ref{sec:Results}.

\subsection{Assembly of Stiffness Matrix and Load Vector \label{sub:Algorithms}}

Algorithm~\ref{alg:AESFEM} presents a summary of the AES-FEM procedure
for assembling the stiffness matrix and load vector for a PDE with
Dirichlet boundary conditions. Unlike the standard FEM procedure,
we build the stiffness matrix row by row, rather than element by element.
This is because the most computationally expensive part of the procedure
is to compute the derivatives for the set of basis functions on each
stencil. A weight function is nonzero only on the neighborhood around
its corresponding node. Since the weight functions correspond to the
rows, we assemble the stiffness matrix row by row, ensuring that we
will only need to compute the derivatives for each neighborhood once.

When computing a row of the stiffness matrix, the first step is to
obtain the stencil of node $k$. This step is performed by utilizing
the data structure presented in Subsection \ref{sub:Data-Structure}
and the proper size of the stencil is ensured by choosing the ring
sizes adaptively. Next, the local coordinates are calculated for the
points in the stencils and the row weights are computed. Using Algorithm~\ref{alg:InitializationCVM},
the QR factorization of the generalized Vandermonde matrix is computed
for the neighborhood. Then for each element that contains node $k$,
we perform the integration of the weak form in a manner that is similar
to standard FEM. The element Jacobian is computed and used to find
the local coordinates of the quadrature points. The derivatives of
the weight function are computed, that is $\nabla\psi_{i}$, at the
quadrature points of the current element. Recall that the weight functions
are the standard hat functions. Next the derivatives of the basis
functions, that is $\nabla\phi_{j}$, are computed at the quadrature
points of the current element. The basis functions are the GLP basis
functions and thus Algorithm~\ref{alg:Implicit-differentiation}
is used. The value of the integral on the current element is computed
and either added to the stiffness matrix or subtracted from the load
vector, depending on whether the basis function corresponds to a node
with Dirichlet boundary conditions. 

When computing the load vector, typically the entries $b_{i}=\int\psi_{i}f\ dV$
are computed using a quadrature rule.  One may evaluate $f$ at the
quadrature points in two ways. The first way is to use the standard
procedure in FEM, i.e., to use the FEM basis functions and the values
of $f$ at the nodes of the element. Let $\vec{M}$ be the vector
of FEM shape functions evaluated at quadrature point $\vec{x}_{k}$
and $\vec{g}_{\mathrm{elem}}$ be the vector of the function values
at the nodes of the element. Then, we have 
\begin{equation}
f\left(\vec{x}_{k}\right)=\vec{M}\cdot\vec{g}_{\mathrm{elem}}.
\end{equation}
Alternatively, we may use GLP basis functions to interpolate the values
of $f$ at the quadrature points. We can approximate an arbitrary
function using the set of GLP basis functions. In matrix notation,
we have 
\begin{equation}
f\left(\vec{x}_{k}\right)=\vec{g}_{\mathrm{sten}}^{T}\left(\vec{S}\tilde{\vec{V}}^{+}\vec{W}\right)^{T}\vec{D}\vec{\mathcal{P}}\left(\vec{x}_{k}\right),
\end{equation}
where $\vec{g}_{\mathrm{sten}}$ is the vector of function values
at the nodes in the stencil and the vector $\vec{\mathcal{P}}\left(\vec{x}_{k}\right)$
has been evaluated at the quadrature point $\vec{x}_{k}$. As mentioned
earlier, we refer to the variant of AES-FEM using the former method
of calculating the load vector as AES-FEM 1 and refer to the latter
variant as AES-FEM 2.

We use Gaussian quadrature to perform the integration within each
element. For quadratic GLP basis functions in 2D, we use a 1-point
rule for stiffness matrix, and a 3-point rule for the load vector.
In 3D, we use a 1-point rule for the stiffness matrix and a 4-point
rule for the load vector. These rules are exact because the basis
functions and their derivatives are quadratic and linear, respectively.

It is worth noting that because of the properties of generalized Lagrange
polynomial basis functions, Dirichlet boundary conditions may be imposed
in AES-FEM in the same manner as in standard FEM. One does not need
to use Lagrange multipliers or a penalty method. Additionally, the
standard method for imposing Neumann boundary conditions may be used
in AES-FEM. 

All the steps inside of the primary for-loop are executed in constant
time, assuming that the size of each neighborhood is bounded. Therefore,
the assembly of the stiffness matrix in AES-FEM has an asymptotic
runtime of $\mathcal{O}(n)$, where $n$ is the number of nodes in
the mesh. When using AES-FEM 2, that is when WLS approximation is
used to compute the approximation of $f$ at the quadrature points,
Algorithm \ref{alg:Implicit-differentiation} is called again. While
this function is constant in runtime, it has a large coefficient and
thus takes longer than approximating the values of $f$ using FEM
(hat) basis functions. Therefore, the assembly time for AES-FEM 2
is longer than that for AES-FEM 1, as can be seen in Section~\ref{sec:Results}.

\begin{algorithm}
\caption{\label{alg:AESFEM}Building a Stiffness Matrix and Load Vector using
AES-FEM}

\textbf{function}: aes\_fem

\textbf{input}: 1. $\vec{x}$, $\mathtt{elem,\ opphfs,\ vh2f}$: mesh
information

\hspace{1.2cm}2. $p$: desired degree for GLP functions

\hspace{1.2cm}3. $\epsilon$: tolerance for rank deficiency

\begin{raggedright}
\hspace{1.2cm}4. $\mathtt{AESFEM1}$: boolean for AES-FEM 1 or AES-FEM
2
\par\end{raggedright}

\begin{raggedright}
\hspace{1.2cm}5. $\mathtt{isDBC}$: flags for Dirichlet boundary
conditions
\par\end{raggedright}

\begin{raggedright}
\textbf{output}: stiffness matrix $\vec{K}$ and load vector $\vec{b}$
\par\end{raggedright}

\begin{algorithmic}[1]

\FOR {each node without Dirichlet boundary conditions} 

\STATE obtain neighborhood of node 

\STATE calculate local parameterization $\vec{x}_{k}$ and row weights
$\vec{w}$ for neighborhood

\STATE aes\_gvm $\leftarrow$ initiate\_GVM($\vec{x}_{k},$ $\vec{w},$
$p,$ $\epsilon$)

\STATE obtain local element neighborhood

\FOR {each element in local neighborhood} 

\STATE calculate element Jacobian and local coordinates of quad-points

\STATE calculate derivatives of FEM shape functions at quad-points

\STATE $\vec{a}\leftarrow\mathcal{D}\vec{\mathcal{P}}(\vec{x})$
where $\mathcal{D}\vec{\mathcal{P}}(\vec{x})$ is defined by the PDE
we are solving

\STATE GLPderivs $\leftarrow$ diff\_WLS(aes\_gvm, $\vec{a}$)

\FOR {each node in neighborhood} 

\IF {not Dirichlet BC node} 

\STATE add integral to appropriate stiffness matrix entry

\ELSE

\STATE subtract integral from load vector

\ENDIF

\ENDFOR

\IF {$\mathtt{AESFEM1}$} 

\STATE calculate load vector over current element using FEM approximations
for quad-points

\ELSE

\STATE calculate load vector over current element using GLP approximations
for quad-points

\ENDIF

\ENDFOR

\ENDFOR

\end{algorithmic}
\end{algorithm}

Once the stiffness matrix and the load vector are assembled, we use
the generalized minimal residual method (GMRES) \cite{saad1986gmres}
with a preconditioner to solve the linear system. GMRES is a standard
Krylov subspace method for iteratively solving a sparse, nonsymmetric
linear system. Specifically, we use the MATLAB implementation of GMRES.
Another option for solving this system would be multigrid methods
\cite{trottenberg2000multigrid}, and we will explore their use in
future work. As a preconditioner, we use incomplete LU factorization
(ILU) in 2D and Gauss-Seidel in 3D.

Finally, for completeness, we include Algorithm~\ref{alg:GFD} to
summarize the GFD procedure for solving PDEs with Dirichlet boundary
conditions. We use this algorithm primarily for comparison with the
AES-FEM algorithm. We can see that for GFD, we need to use Algorithm~\ref{alg:Implicit-differentiation}
to compute the weights once for each non-Dirichlet node in the mesh;
that is, if there are $n$ non-Dirichlet nodes, Algorithm~\ref{alg:Implicit-differentiation}
is called $n$ times. For AES-FEM, for a given node, we use this algorithm
once for every element containing that node. Thus if every node has
a neighborhood of $k$ elements and there are $n$ non-Dirichlet nodes,
we call the algorithm $kn$ times. Therefore, the assembly time is
longer for AES-FEM than for GFD, as we will see in Section~\ref{sec:Results}. 

\begin{algorithm}
\caption{\label{alg:GFD}Constructing a GFD coefficient matrix}

\textbf{function}: gfd

\textbf{input}: 1. $\vec{x}$, $\mathtt{elem,\ opphfs,\ vh2f}$: mesh
information

\hspace{1.2cm}2. $p$: desired degree for GFD functions

\begin{raggedright}
\hspace{1.2cm}3. $\mathtt{isDBC}$: flags for Dirichlet boundary
conditions
\par\end{raggedright}

\begin{raggedright}
\textbf{output}: GFD matrix $\vec{K}$ and vector $\vec{b}$
\par\end{raggedright}

\begin{algorithmic}[1]

\FOR {each node without Dirichlet boundary conditions} 

\STATE obtain neighborhood of node

\STATE calculate local parameterization and row weights for neighborhood

\STATE gfd\_cvm $\leftarrow$ initiate\_CVM($\vec{x}_{k},$ $\vec{w},$
$p,$ $\epsilon$)

\STATE $\vec{a}\leftarrow\mathcal{D}\vec{\mathcal{P}}(\vec{x})$
where $\mathcal{D}\vec{\mathcal{P}}(\vec{x})$ is defined by the PDE
we are solving

\STATE GFDderivs $\leftarrow$ diff\_WLS(gfd\_cvm, $\vec{a}$)

\FOR {each node in local neighborhood} 

\IF {not BC node} 

\STATE enter value in matrix

\ELSE

\STATE subtract from RHS vector

\ENDIF

\ENDFOR

\ENDFOR

\end{algorithmic}
\end{algorithm}

\section{Numerical Results\label{sec:Results} }

In this section, we compare the accuracy, efficiency, and element
shape quality dependence of AES-FEM 1, AES-FEM 2, FEM with linear
basis functions, and GFD with quadratic basis functions. We compare
these four method because the sparsity pattern of the coefficient
matrix is nearly identical for all the methods. The sparsity pattern
determines the amount of storage necessary and also the computational
cost of vector-matrix multiplication. The errors are calculated using
the discrete $L_{2}$ and $L_{\infty}$ norms. Let $u$ denote the
exact solution and let $\hat{u}$ denote the numerical solution. Then,
we calculate the norms as
\begin{equation}
L_{2}\left(\text{error}\right)=\left(\int_{\Omega}|\hat{u}-u|^{2}\partial\Omega\right)^{1/2}\quad\mbox{and}\quad L_{\infty}\left(\text{error}\right)=\max_{i}|\hat{u}-u|.
\end{equation}
For each series of meshes of different grid resolution, we calculate
the average convergence rate as 
\begin{equation}
\text{convergence rate }=-\log_{2}\left(\frac{\text{error of mesh 1}}{\text{error of mesh 4}}\right)\left/\log_{2}\left(\sqrt[d]{\frac{\text{nodes in mesh 1}}{\text{nodes in mesh 4}}}\right),\right.
\end{equation}
where $d$ is the spacial dimension.

\subsection{2D Results}

In this section, we present the results of our 2-dimensional experiments.
We use two different series of meshes, each series with 4 meshes.
The first series of meshes (referred to collectively as ``mesh series
1'') is generated by placing nodes on a regular grid and then using
MATLAB's Delaunay triangularization function to create the elements.
The meshes range in size from $64\times64$ to $512\times512$ nodes.
On the most refined mesh, the minimum angle is 45 degrees and the
maximum angle is 90 degrees. The maximum aspect ratio is 1.41, where
a triangle's aspect ratio is defined as the ratio of the length of
longest edge to the length of the smallest edge. The second series
of meshes (referred collectively as ``mesh series 2'') is generated
by using Triangle \cite{shewchuk1996triangle}. The number of nodes
for each level of refinement is 4,103, 16,401, 65,655, and 262,597,
respectively, approximately the same as those in series 1. On the
most refined mesh, the maximum angle is 129.6 degrees and the minimum
angle is 22.4 degrees. The maximum aspect ratio is 2.61. See Figure~\ref{fig:Mesh_2D_example}
for a visualization of the types of meshes used. 

\begin{flushleft}
\begin{figure}
\begin{minipage}[t]{0.43\textwidth}%
\begin{center}
\includegraphics[width=1\textwidth]{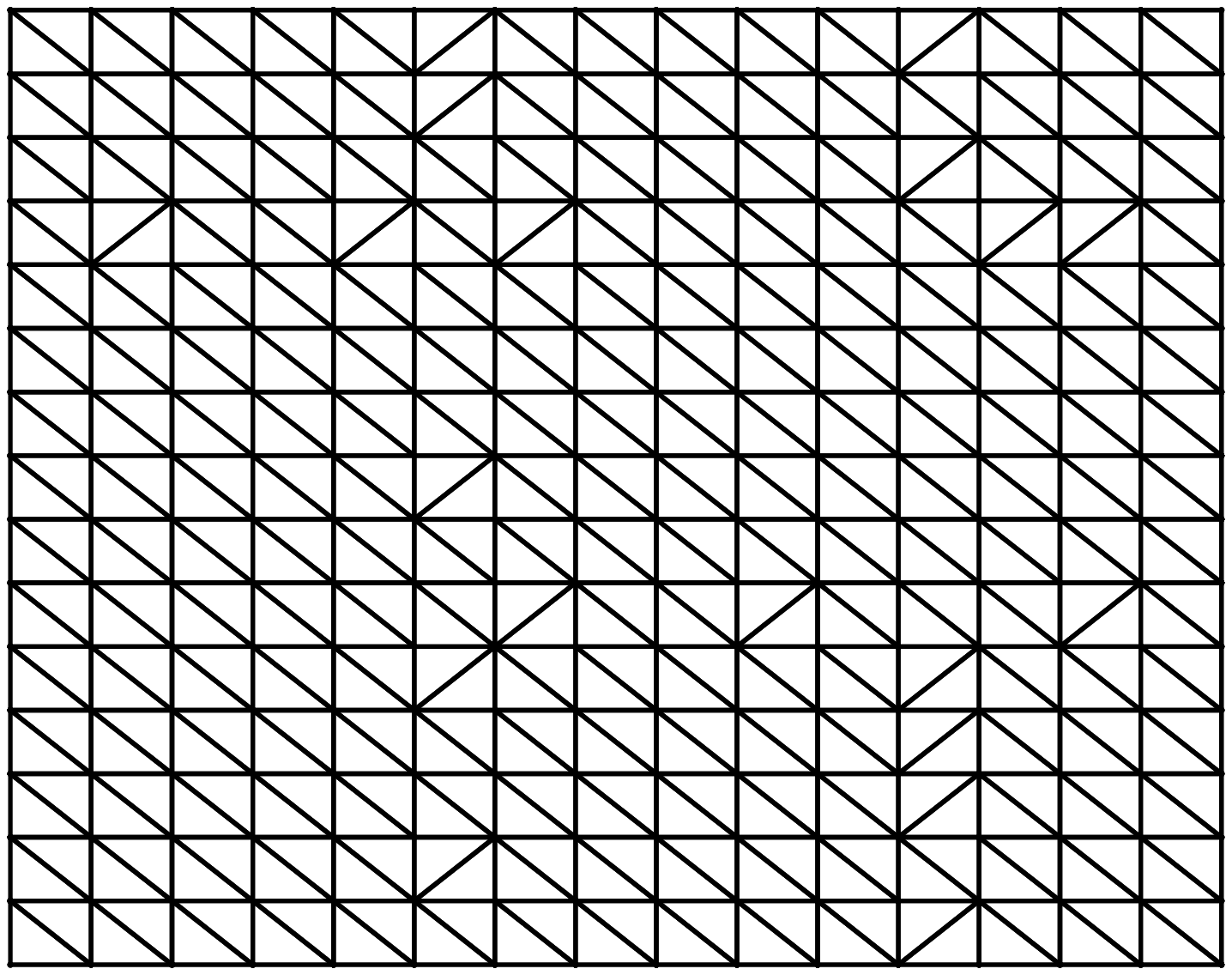}
\par\end{center}%
\end{minipage}\hfill{} %
\begin{minipage}[t]{0.43\textwidth}%
\begin{center}
\includegraphics[width=1\textwidth]{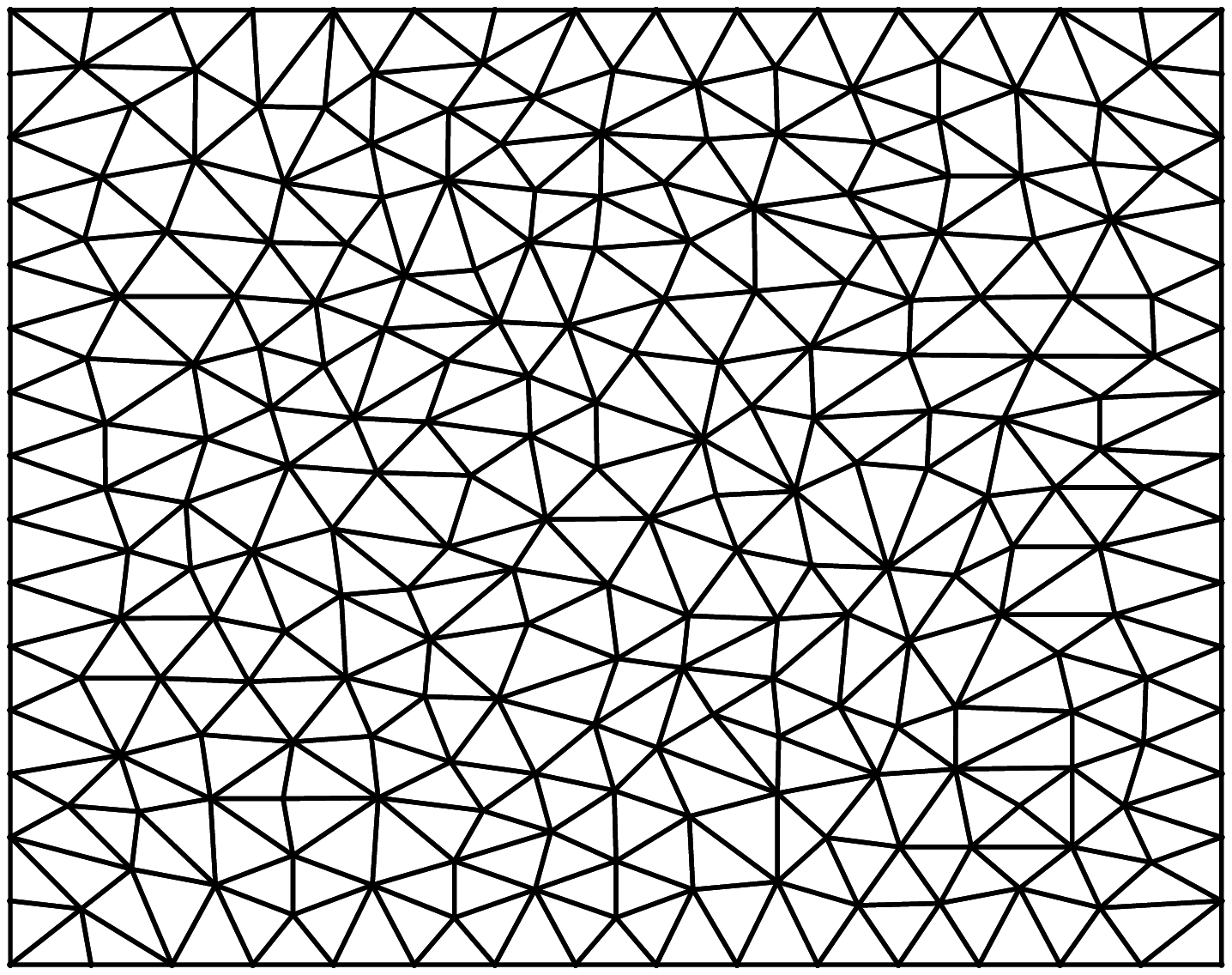}
\par\end{center}%
\end{minipage}

\raggedright{}\caption{The mesh on the left is representative of the meshes used in series
1. The mesh on the right is representative of the meshes used in series
2. Note that the meshes above are coarser than the meshes used in
computations so that the details can be seen clearly.\label{fig:Mesh_2D_example}}
\end{figure}

\par\end{flushleft}

\subsubsection{Poisson Equation}

The first set of results we present is for the Poisson equation with
Dirichlet boundary conditions on the unit square. That is, 
\begin{align}
-\nabla^{2}u & =f\quad\text{in }\Omega=[0,1]^{2},\\
u & =g\quad\text{on }\partial\Omega.
\end{align}
We consider the following three analytic solutions:
\begin{align}
u_{1} & =16x(1-x)y(1-y),\label{eq:polynomial_solution_2D}\\
u_{2} & =\cos(\pi x)\cos(\pi y),\label{eq:trigonometric_solution_2D}\\
u_{3} & =\frac{1}{\sinh\pi\cosh\pi}\sinh(\pi x)\cosh(\pi y).\label{eq:hyperbolic_solution_2D}
\end{align}
The Dirichlet boundary conditions are obtained from the given analytic
solutions. The boundary conditions for $u_{1}$ are homogeneous and
they are non-homogenous for $u_{2}$ and $u_{3}$ . 

The $L_{\infty}$ and $L_{2}$ norm errors for $u_{1}$ on mesh series
1 are displayed in Figure \ref{fig:2D_Poisson_mesh1_u1}. One can
see that the two graphs are fairly similar; this is true for $u_{2}$
and $u_{3}$ as well and thus we show only the $L_{\infty}$ norm
errors for these two problems; see Figure \ref{fig:2D_Poisson_mesh1_u2=0000263}.
GFD is the most accurate for $u_{1}$ and $u_{2}$ and AES-FEM 2 is
the most accurate for $u_{3}$. FEM is the least accurate in all three
cases. 

For mesh series 2, the $L_{\infty}$ and $L_{2}$ norm errors for
$u_{1}$ can be seen in Figure \ref{fig:2D_Poisson_mesh2_u1} and
the $L_{\infty}$ norm errors for $u_{2}$ and $u_{3}$ can be seen
in Figure \ref{fig:2D_Poisson_mesh2_u2=0000263}. On this mesh series,
AES-FEM 2 has the lowest error for $u_{1}$ and $u_{2}$. For $u_{3}$,
the errors for GFD and AES-FEM 2 are very similar. 

\begin{flushleft}
\begin{figure}
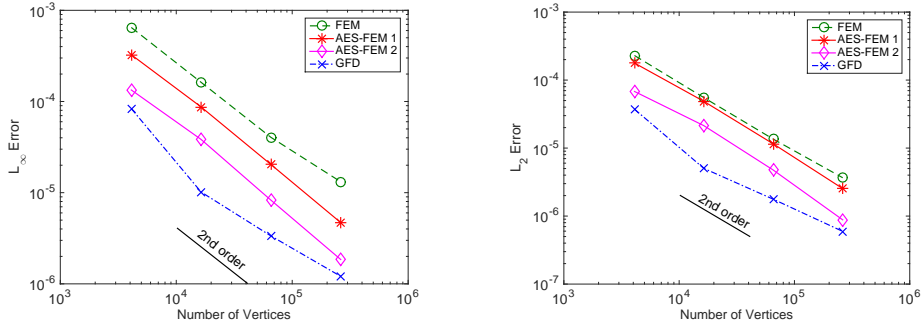

\begin{minipage}[t]{0.45\textwidth}%
\begin{center}
\includegraphics[width=1\textwidth]{2D_Pois_mesh1_u1_Inf}
\par\end{center}%
\end{minipage}\hfill{} %
\begin{minipage}[t]{0.45\textwidth}%
\begin{center}
\includegraphics[width=1\textwidth]{2D_Pois_mesh1_u1_L2}
\par\end{center}%
\end{minipage}

\raggedright{}\caption{The errors for 2D Poisson equation on mesh 1 for $u_{1}$. The errors
were computed using the $L_{\infty}$ norm (left) and the $L_{2}$
norm (right). \label{fig:2D_Poisson_mesh1_u1}}
\end{figure}

\par\end{flushleft}

\begin{flushleft}
\begin{figure}
\begin{minipage}[t]{0.45\textwidth}%
\begin{center}
\includegraphics[width=1\textwidth]{2D_Pois_mesh1_u2_Inf}
\par\end{center}%
\end{minipage}\hfill{} %
\begin{minipage}[t]{0.45\textwidth}%
\begin{center}
\includegraphics[width=1\textwidth]{2D_Pois_mesh1_u3_Inf}
\par\end{center}%
\end{minipage}

\raggedright{}\caption{The $L_{\infty}$ norm errors for the 2D Poisson equation on mesh
1 for $u_{2}$ (left) and $u_{3}$ (right).\label{fig:2D_Poisson_mesh1_u2=0000263}}
\end{figure}

\par\end{flushleft}

\begin{flushleft}
\begin{figure}
\begin{minipage}[t]{0.45\textwidth}%
\begin{center}
\includegraphics[width=1\textwidth]{2D_Pois_mesh2_u1_Inf}
\par\end{center}%
\end{minipage}\hfill{} %
\begin{minipage}[t]{0.45\textwidth}%
\begin{center}
\includegraphics[width=1\textwidth]{2D_Pois_mesh2_u1_L2}
\par\end{center}%
\end{minipage}

\raggedright{}\caption{The errors for 2D Poisson equation on mesh 2 for $u_{1}$. The errors
were computed using the $L_{\infty}$ norm (left) and the $L_{2}$
norm (right). \label{fig:2D_Poisson_mesh2_u1}}
\end{figure}

\par\end{flushleft}

\begin{flushleft}
\begin{figure}
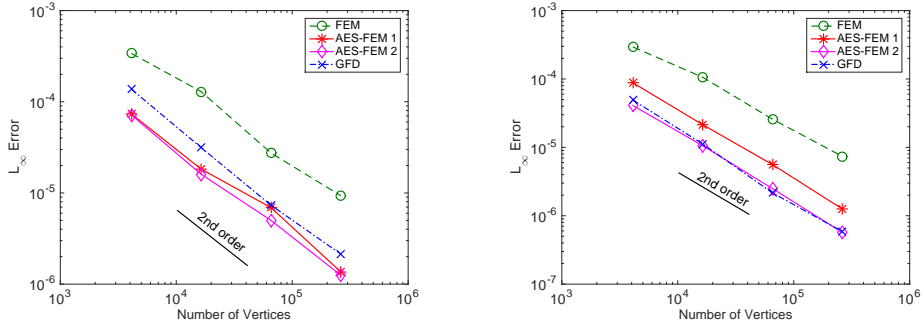

\begin{minipage}[t]{0.45\textwidth}%
\begin{center}
\includegraphics[width=1\textwidth]{2D_Pois_mesh2_u2_Inf}
\par\end{center}%
\end{minipage}\hfill{} %
\begin{minipage}[t]{0.45\textwidth}%
\begin{center}
\includegraphics[width=1\textwidth]{2D_Pois_mesh2_u3_Inf}
\par\end{center}%
\end{minipage}

\raggedright{}\caption{The $L_{\infty}$ norm errors for the 2D Poisson equation on mesh
2 for $u_{2}$ (left) and $u_{3}$ (right). \label{fig:2D_Poisson_mesh2_u2=0000263}}
\end{figure}

\par\end{flushleft}

\subsubsection{Convection-Diffusion Equation}

We consider the convection-diffusion equation with Dirichlet boundary
conditions on the unit square. That is, 
\begin{align}
-\nabla^{2}u+c\cdot\nabla u & =f\quad\text{in }\Omega,\\
u & =g\quad\text{on }\partial\Omega.
\end{align}
We take $\vec{c}=[1,1]^{T}$ for all of our tests and we consider
the same analytic solutions as for the Poisson equation. Again the
boundary conditions are obtained from the given analytic solutions.

The $L_{\infty}$ and $L_{2}$ norm errors obtained from the convection-diffusion
equation on mesh series 1 with $u_{1}$ are presented in Figure \ref{fig:2D_CD_mesh1_u1}.
The $L_{\infty}$ norm errors for $u_{2}$ and $u_{3}$ on mesh series
1 are in Figure \ref{fig:2D_CD_mesh1_u2=0000263}. For all three problems,
AES-FEM and GFD are both more accurate than linear FEM. For $u_{1}$,
GFD is the most accurate. For $u_{2}$, the most accurate method is
either AES-FEM 2 or GFD depending on the level of refinement. For
$u_{3}$, AES-FEM 2 is the most accurate.

On mesh series 2, AES-FEM 2 is the most accurate for $u_{1}$, as
can be seen in Figure~\ref{fig:2D_CD_mesh2_u1}. For $u_{2}$ AES-FEM
1 or AES-FEM 2 is the most accurate, and for $u_{3}$ GFD is the most
accurate; see Figure \ref{fig:2D_CD_mesh2_u2=0000263}. In all these
cases, AES-FEM is more accurate than FEM.

\begin{figure}
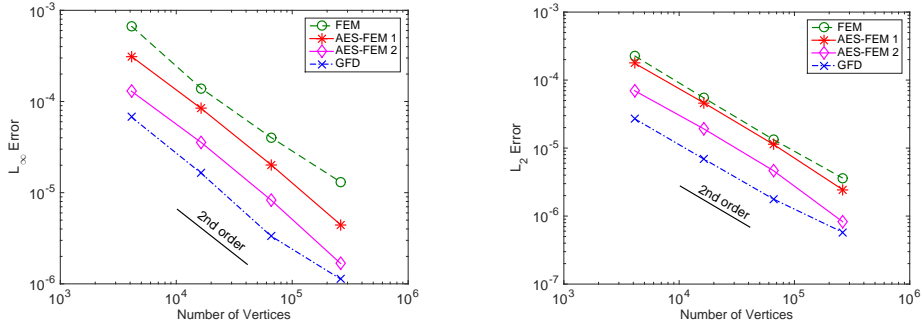

\begin{minipage}[t]{0.45\textwidth}%
\begin{center}
\includegraphics[width=1\textwidth]{2D_CD_mesh1_u1_Inf}
\par\end{center}%
\end{minipage}\hfill{} %
\begin{minipage}[t]{0.45\textwidth}%
\begin{center}
\includegraphics[width=1\textwidth]{2D_CD_mesh1_u1_L2}
\par\end{center}%
\end{minipage}

\raggedright{}\caption{The errors for 2D convection-diffusion equation on mesh 1 for $u_{1}$.
The errors were computed using the $L_{\infty}$ norm (left) and the
$L_{2}$ norm (right). \label{fig:2D_CD_mesh1_u1}}
\end{figure}

\begin{flushleft}
\begin{figure}
\begin{minipage}[t]{0.45\textwidth}%
\begin{center}
\includegraphics[width=1\textwidth]{2D_CD_mesh1_u2_Inf}
\par\end{center}%
\end{minipage}\hfill{} %
\begin{minipage}[t]{0.45\textwidth}%
\begin{center}
\includegraphics[width=1\textwidth]{2D_CD_mesh1_u3_Inf}
\par\end{center}%
\end{minipage}

\raggedright{}\caption{The $L_{\infty}$ norm errors for the 2D convection-diffusion equation
on mesh 1 for $u_{2}$ (left) and $u_{3}$ (right).\label{fig:2D_CD_mesh1_u2=0000263}}
\end{figure}

\par\end{flushleft}

\begin{flushleft}
\begin{figure}
\begin{minipage}[t]{0.45\textwidth}%
\begin{center}
\includegraphics[width=1\textwidth]{2D_CD_mesh2_u1_Inf}
\par\end{center}%
\end{minipage}\hfill{} %
\begin{minipage}[t]{0.45\textwidth}%
\begin{center}
\includegraphics[width=1\textwidth]{2D_CD_mesh2_u1_L2}
\par\end{center}%
\end{minipage}

\raggedright{}\caption{The errors for 2D convection-diffusion equation on mesh 2 for $u_{1}$.
The errors were computed using the $L_{\infty}$ norm (left) and the
$L_{2}$ norm (right). \label{fig:2D_CD_mesh2_u1}}
\end{figure}

\par\end{flushleft}

\begin{flushleft}
\begin{figure}
\begin{minipage}[t]{0.45\textwidth}%
\begin{center}
\includegraphics[width=1\textwidth]{2D_CD_mesh2_u2_Inf}
\par\end{center}%
\end{minipage}\hfill{} %
\begin{minipage}[t]{0.45\textwidth}%
\begin{center}
\includegraphics[width=1\textwidth]{2D_CD_mesh2_u3_Inf}
\par\end{center}%
\end{minipage}

\raggedright{}\caption{The $L_{\infty}$ norm errors for the 2D convection-diffusion equation
on mesh 2 for $u_{2}$ (left) and $u_{3}$ (right). \label{fig:2D_CD_mesh2_u2=0000263}}
\end{figure}

\par\end{flushleft}

\subsubsection{Element-Quality Dependence Test \label{sub:2D_mesh_quality_test}}

We test how FEM, AES-FEM, and GFD perform on a series of progressively
worse meshes. We begin with the most refined mesh from mesh series
2. We select 6 of the 523,148 elements and incrementally move one
of their nodes towards the opposite edge so as to create flatter triangles.
We then solve the Poisson equation with the polynomial analytic solution
$u_{1}$ in (\ref{eq:polynomial_solution_2D}) and record the condition
numbers of the coefficient and stiffness matrices and the numbers
of iterations required for the solver to converge. Since the stiffness
matrix is the same for AES-FEM 1 and AES-FEM 2, the results are just
labeled as AES-FEM. We use the conjugate gradient method with incomplete
Cholesky preconditioner for FEM and we use GMRES with incomplete LU
preconditioner for AES-FEM and GFD. The tolerance for the solvers
is $10^{-8}$ and the drop tolerance for the preconditioners is $10^{-3}$.
As a measure of the mesh quality, we consider the cotangent of the
minimum angle in the mesh; as the minimum angle tends to zero, the
cotangent tends towards infinity. For very small angles, the cotangent
of the angle is approximately equal to the reciprocal of the angle.
We estimate the condition numbers using the MATLAB function $\mathtt{condest}$,
which computes a lower bound for the 1-norm condition number.

The worse the mesh quality, the higher the condition number of the
stiffness matrix resulting from FEM. In contrast, the condition numbers
of the GFD coefficient matrix and stiffness matrix from AES-FEM remain
almost constant. As the condition number for FEM rises, so does the
number of iterations required for the solver to converge, from 102
to 128. The numbers of iterations required to solve the equation for
AES-FEM and GFD remain constant, at 73 and 74, respectively. We show
the results for 6 meshes. Preconditioned conjugate gradient stagnates
when trying to solve the FEM linear system from the 7th mesh, where
the minimum angle is approximately $9.1\times10^{-5}$ degrees. Solving
the AES-FEM and GFD linear systems from the 7th mesh requires the
same numbers of iterations as the other meshes, 73 and 74 respectively.
See Figure \ref{fig:2D_poor_quality_meshes} for a comparison of the
condition numbers and the numbers of iterations. 

The errors for both AES-FEM 1 and AES-FEM 2 rose slightly between
the 3rd and the 4th mesh; the errors were then constant for the rest
of the meshes. The errors for FEM remained constant and the errors
for GFD remained nearly constant over the 6 meshes. AES-FEM 1, AES-FEM
2 and GFD converge on the 7th mesh in the series with the same errors
as on Mesh 6, whereas for FEM the solver stagnates. See Table \ref{tab:Errors-2D-Bad_meshes}
for specific errors.

\begin{table}
\begin{centering}
\caption{Errors in $L_{2}$ norm for FEM, AES-FEM 1, AES-FEM 2, and GFD for
$u_{1}$ on a series of meshes with progressively worse mesh element
quality.\label{tab:Errors-2D-Bad_meshes}}

\par\end{centering}

\centering{}%
\begin{tabular}{|c|c|c|c|c|}
\hline 
 & FEM & AES-FEM 1 & AES-FEM 2 & GFD\tabularnewline
\hline 
\hline 
Mesh 1 & $2.42\times10^{-6}$ & $2.47\times10^{-6}$ & $7.82\times10^{-7}$ & $1.11\times10^{-6}$\tabularnewline
\hline 
Mesh 2 & $2.42\times10^{-6}$ & $2.47\times10^{-6}$ & $7.83\times10^{-7}$ & $1.10\times10^{-6}$\tabularnewline
\hline 
Mesh 3 & $2.42\times10^{-6}$ & $2.47\times10^{-6}$ & $7.82\times10^{-7}$ & $1.10\times10^{-6}$\tabularnewline
\hline 
Mesh 4 & $2.42\times10^{-6}$ & $2.81\times10^{-6}$ & $1.19\times10^{-6}$ & $1.10\times10^{-6}$\tabularnewline
\hline 
Mesh 5 & $2.42\times10^{-6}$ & $2.81\times10^{-6}$ & $1.19\times10^{-6}$ & $1.10\times10^{-6}$\tabularnewline
\hline 
Mesh 6 & $2.42\times10^{-6}$ & $2.81\times10^{-6}$ & $1.19\times10^{-6}$ & $1.10\times10^{-6}$\tabularnewline
\hline 
\end{tabular}
\end{table}

\begin{figure}
\begin{minipage}[t]{0.45\textwidth}%
\begin{center}
\includegraphics[width=1\textwidth]{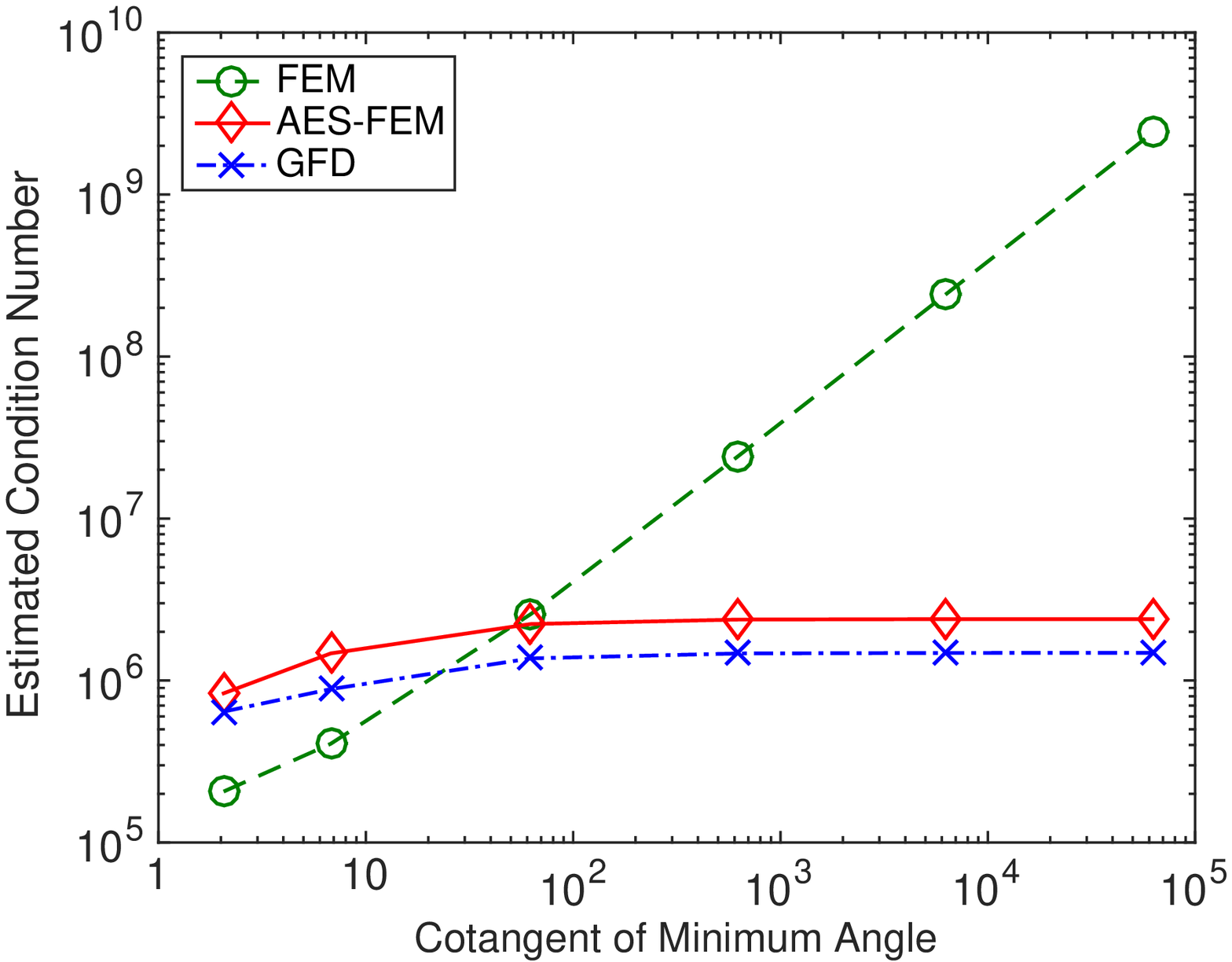}
\par\end{center}%
\end{minipage}\hfill{} %
\begin{minipage}[t]{0.45\textwidth}%
\begin{center}
\includegraphics[width=1\textwidth]{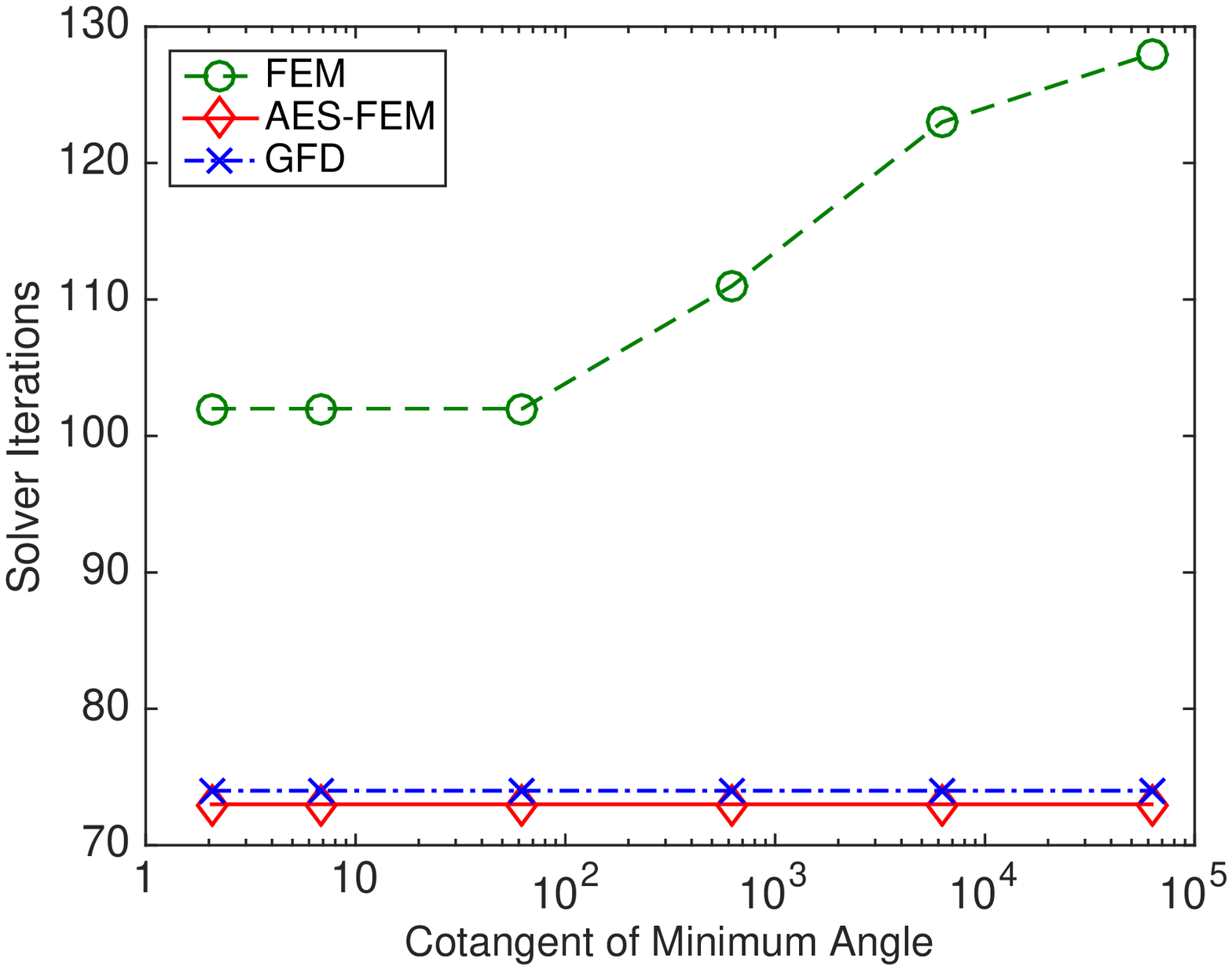}
\par\end{center}%
\end{minipage}

\raggedright{}\caption{The condition numbers of the stiffness matrices for FEM and Adaptive
Extended Stencil (AES)-FEM and the coefficient matrix for generalized
finite difference (GFD) (left) and the numbers of solver iterations
(right). Solvers used are preconditioned conjugate gradient (PCG)
for FEM and preconditioned generalized minimal residual (GMRES) for
AES-FEM and GFD.\label{fig:2D_poor_quality_meshes}}
\end{figure}

\subsubsection{Efficiency}

We compare the runtimes for the four methods: AES-FEM 1, AES-FEM 2,
FEM, and GFD. We consider the convection-diffusion equation on the
most refined mesh of series 2 with the polynomial analytic solution
$u_{1}$ for this runtime experiment. We decompose the total time
into 4 subcategories: \textit{Initialization,} which includes the
time to load the mesh and the time to assign the boundary conditions
and problem values; \textit{Assembly,} which includes the time to
build the stiffness matrix and load vector; \textit{Preconditioner,}
which is the time it takes to construction the matrix preconditioner
using incomplete LU factorization with a drop tolerance of $10^{-3}$;
and \textit{Solver,} which is the amount of time for solving the preconditioned
system using GMRES with a tolerance of $10^{-8}$. See Figure \ref{fig:time_2D_CD}
for the comparison. The initialization time is minuscule compared
to the other categories and is not visible in the figure. FEM requires
76 iterations of GMRES to converge, AES-FEM 1 and AES-FEM 2 both require
74 iterations, and GFD requires 75 iterations.

One can see that FEM has an advantage when it comes to efficiency
on the same mesh. In terms of total runtime, it is about 2.1 times
faster than AES-FEM 1, 2.1 times faster than AES-FEM 2, and 1.8 times
faster than GFD. In terms of assembly time, FEM is about 6.1 times
faster than AES-FEM 1, 7.7 times faster than AES-FEM 2, and 2.2 times
faster than GFD. Comparing the assembly time of AES-FEM and GFD, we
see that GFD is approximately 3.2 and 3.5 times faster than AES-FEM
1 and AES-FEM 2, respectively. 

In 2D, assembling the load vector using FEM basis functions (AES-FEM
1) saves some time compared to using GLP basis functions (AES-FEM
2). AES-FEM 1 offers a savings of approximately 1.1 seconds or, in
other words, a 10.3\% reduction of assembly time and a 3.5\% reduction
of total time compared to AES-FEM 2. We will see in the next section
that the efficiency of these two methods varies more in 3D. 

\begin{figure}
\begin{centering}
\includegraphics[scale=0.6]{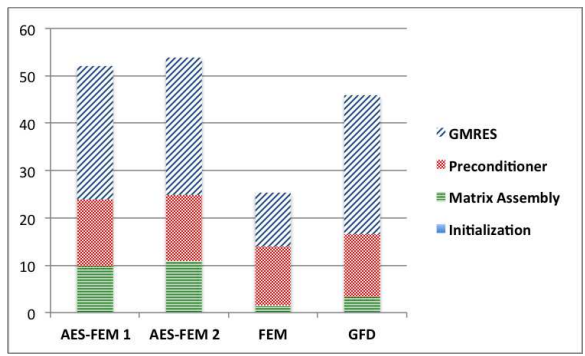}
\par\end{centering}

\caption{Runtimes for a 2D convection-diffusion equation on the most refined
mesh in series 2.\label{fig:time_2D_CD}}
\end{figure}

However, in terms of error versus runtime, AES-FEM is competitive
with, and often is more efficient than, the classical FEM with linear
basis functions. In Figure~\ref{fig:time_vs_error_2d}, we compare
the $L_{\infty}$ norm errors versus runtimes for the four methods
on mesh series 2 for the Poisson equation and the convection-diffusion
equation with the analytic solution equal to $u_{2}$. For the Poisson
equation, all four methods are very similar, with AES-FEM 1 being
slightly more efficient for finer meshes. For the convection-diffusion
equation, AES-FEM 1 and AES-FEM 2 are approximately the same in terms
of efficiency and are the most efficient. GFD is also more efficient
than FEM.

\begin{figure}
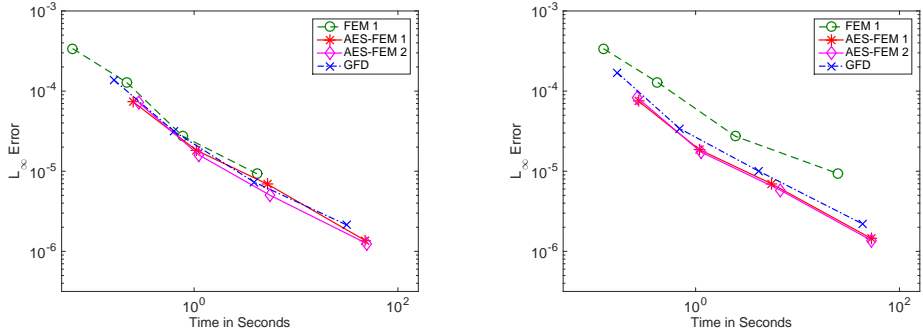

\begin{minipage}[t]{0.45\textwidth}%
\begin{center}
\includegraphics[width=1\textwidth]{error_vs_time_pois_mesh2_u2_inf_2d}
\par\end{center}%
\end{minipage}\hfill{} %
\begin{minipage}[t]{0.45\textwidth}%
\begin{center}
\includegraphics[width=1\textwidth]{error_vs_time_cd_mesh2_u2_inf_2d}
\par\end{center}%
\end{minipage}

\raggedright{}\caption{$L_{\infty}$ norm errors versus runtimes for a 2D Poisson equation
(left) and convection-diffusion equation (right) on mesh series 2.
Lower is better.\label{fig:time_vs_error_2d} }
\end{figure}

\subsection{3D Results}

In this section, we present the results from the 3D experiments. We
consider the Poisson equation and the convection-diffusion equation
in 3D. We test three problems for each equation on two different series
of meshes, each with four levels of refinement. The first series of
meshes (referred to collectively as ``mesh series 1'') is created
by placing nodes on a regular grid and using MATLAB's Delaunay triangularization
to create the elements. The meshes in series 1 range from $8\times8\times8$
nodes to $64\times64\times64$ nodes. The minimum dihedral angle in
the most refined mesh of series 1 is 35.2 degrees and the maximum
dihedral angle is 125.2 degrees. The maximum aspect ratio is 4.9,
where the aspect ratio of a tetrahedron is defined as the ratio of
the longest edge length to the smallest height. The second series
of meshes (referred to collectively as ``mesh series 2'') is created
using TetGen \cite{si2015tetgen}. The number of nodes in mesh series
2 for each level of refinement is 509, 4,080, 32,660, and 261,393,
which is approximately the same as the meshes in series 1. The minimum
dihedral angle of the most refined mesh in series 2 is 6.7 degrees
and the largest dihedral angle is 165.5 degrees. The largest aspect
ratio is 15.2.

\subsubsection{Poisson Equation}

We first consider the Poisson equation with Dirichlet boundary conditions
on the unit cube. That is,
\begin{align}
-\nabla^{2}u & =f\quad\text{in }\Omega,\\
u & =g\quad\text{on }\partial\Omega.
\end{align}
where $\Omega=[0,1]^{3}$. We consider three different analytic solutions
listed below.
\begin{align}
u_{1} & =64x\left(1-x\right)y\left(1-y\right)z\left(1-z\right),\label{eq:polynomial_solution_3D}\\
u_{2} & =\cos(\pi x)\cos(\pi y)\cos(\pi z),\label{eq:trigonometric_solution_3D}\\
u_{3} & =\frac{1}{\sinh\pi\cosh\pi\cosh\pi}\sinh(\pi x)\cosh(\pi y)\cosh(\pi z).\label{eq:hyerbolic_solution_3D}
\end{align}
The Dirichlet boundary conditions are derived from the analytic solutions.
They are homogeneous for $u_{1}$ and non-homogeneous for $u_{2}$
and $u_{3}$.

The $L_{\infty}$ and $L_{2}$ norm errors for the Poisson equation
on mesh series 1 for $u_{1}$ can be seen in Figure \ref{fig:3D_Poisson_mesh1_u1}
and the $L_{\infty}$ norm errors for $u_{2}$ and $u_{3}$ can be
seen in Figure \ref{fig:3D_Poisson_mesh1_u2=0000263}. GFD is the
most accurate for $u_{1}$ and $u_{2}$. AES-FEM 2 is the most accurate
for $u_{3}$.

For mesh series 2, AES-FEM 2 is the most accurate and is close to
an order of magnitude more accurate than FEM. See Figure \ref{fig:3D_Poisson_mesh2_u1}
for the $L_{\infty}$ and $L_{2}$ norm errors for $u_{1}$ and Figure
\ref{fig:3D_Poisson_mesh2_u2=0000263} for the $L_{\infty}$ norm
errors for $u_{2}$ and $u_{3}$. 

\begin{flushleft}
\begin{figure}
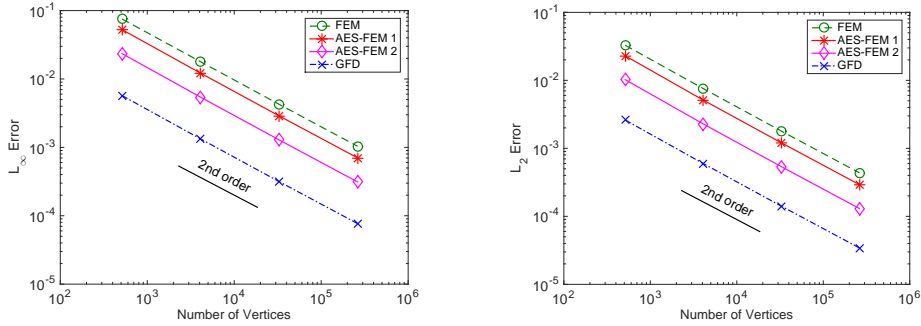

\begin{minipage}[t]{0.45\textwidth}%
\begin{center}
\includegraphics[width=1\textwidth]{3D_Pois_mesh1_u1_Inf}
\par\end{center}%
\end{minipage}\hfill{} %
\begin{minipage}[t]{0.45\textwidth}%
\begin{center}
\includegraphics[width=1\textwidth]{3D_Pois_mesh1_u1_L2}
\par\end{center}%
\end{minipage}

\raggedright{}\caption{The errors for 3D Poisson equation on mesh 1 with $u_{1}$. The errors
were computed using the $L_{\infty}$ norm (left) and the $L_{2}$
norm (right). \label{fig:3D_Poisson_mesh1_u1}}
\end{figure}

\par\end{flushleft}

\begin{flushleft}
\begin{figure}
\begin{minipage}[t]{0.45\textwidth}%
\begin{center}
\includegraphics[width=1\textwidth]{3D_Pois_mesh1_u2_Inf}
\par\end{center}%
\end{minipage}\hfill{} %
\begin{minipage}[t]{0.45\textwidth}%
\begin{center}
\includegraphics[width=1\textwidth]{3D_Pois_mesh1_u3_Inf}
\par\end{center}%
\end{minipage}

\raggedright{}\caption{The $L_{\infty}$ norm errors for the 3D Poisson equation on mesh
1 for $u_{2}$ (left) and $u_{3}$ (right).\label{fig:3D_Poisson_mesh1_u2=0000263}}
\end{figure}

\par\end{flushleft}

\begin{flushleft}
\begin{figure}
\begin{minipage}[t]{0.45\textwidth}%
\begin{center}
\includegraphics[width=1\textwidth]{3D_Pois_mesh2_u1_Inf}
\par\end{center}%
\end{minipage}\hfill{} %
\begin{minipage}[t]{0.45\textwidth}%
\begin{center}
\includegraphics[width=1\textwidth]{3D_Pois_mesh2_u1_L2}
\par\end{center}%
\end{minipage}

\raggedright{}\caption{The errors for 3D Poisson equation on mesh 2 for $u_{1}$. The errors
were computed using the $L_{\infty}$ norm (left) and the $L_{2}$
norm (right). \label{fig:3D_Poisson_mesh2_u1}}
\end{figure}

\par\end{flushleft}

\begin{flushleft}
\begin{figure}
\begin{minipage}[t]{0.45\textwidth}%
\begin{center}
\includegraphics[width=1\textwidth]{3D_Pois_mesh2_u2_Inf}
\par\end{center}%
\end{minipage}\hfill{} %
\begin{minipage}[t]{0.45\textwidth}%
\begin{center}
\includegraphics[width=1\textwidth]{3D_Pois_mesh2_u3_Inf}
\par\end{center}%
\end{minipage}

\raggedright{}\caption{The $L_{\infty}$ norm errors for the 3D Poisson equation on mesh
2 for $u_{2}$ (left) and $u_{3}$ (right). \label{fig:3D_Poisson_mesh2_u2=0000263}}
\end{figure}

\par\end{flushleft}

\subsubsection{Convection-Diffusion Equation}

We consider the convection-diffusion equation with Dirichlet boundary
conditions on the unit cube, $\Omega=[0,1]^{3}$. 
\begin{align}
-\nabla^{2}u+c\cdot\nabla u & =f\quad\text{in }\Omega,\\
u & =g\quad\text{on }\partial\Omega.
\end{align}
We take $\vec{c}=[1,1,1]^{T}$ and we consider the same analytic solutions
as in the previous section. Again, the Dirichlet boundary conditions
are derived from the analytic solutions $u_{1}$, $u_{2}$, and $u_{3}$. 

The $L_{\infty}$ and $L_{2}$ norm errors for the 3D convection-diffusion
equation on mesh series 1 for $u_{1}$ can be seen in Figure \ref{fig:3D_CD_mesh1_u1},
see Figure \ref{fig:3D_CD_mesh1_u2=0000263} for the $L_{\infty}$
norm errors on mesh series 1 for $u_{2}$ and $u_{3}$. As with the
Poisson equation, GFD is the most accurate for $u_{1}$ and $u_{2}$.
For $u_{3}$, either GFD or AES-FEM 2 is the most accurate depending
on the level of refinement. 

On mesh series 2, AES-FEM 2 is the most accurate for $u_{1}$ and
$u_{2}$, as can be seen in Figure~\ref{fig:3D_CD_mesh2_u1} and
the left panel of Figure~\ref{fig:3D_CD_mesh2_u2=0000263}. For $u_{3}$,
either GFD or AES-FEM 2 is the most accurate, as can be seen in the
right panel of Figure~\ref{fig:3D_CD_mesh2_u2=0000263}. Similar
to 2D results, AES-FEM is more accurate than FEM in all these cases.

\begin{figure}
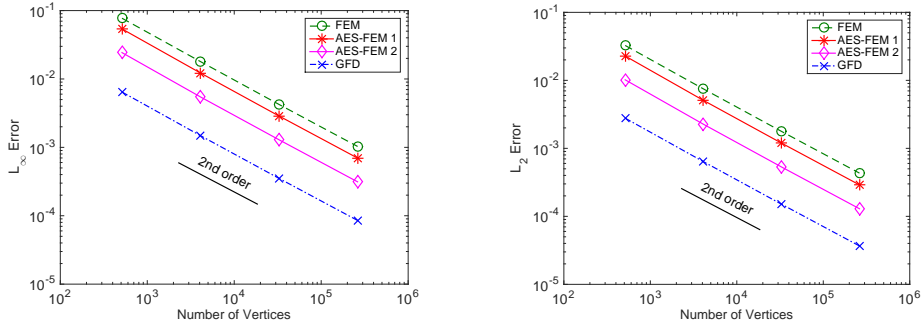

\begin{minipage}[t]{0.45\textwidth}%
\begin{center}
\includegraphics[width=1\textwidth]{3D_CD_mesh1_u1_Inf}
\par\end{center}%
\end{minipage}\hfill{} %
\begin{minipage}[t]{0.45\textwidth}%
\begin{center}
\includegraphics[width=1\textwidth]{3D_CD_mesh1_u1_L2}
\par\end{center}%
\end{minipage}

\raggedright{}\caption{The errors for 3D convection-diffusion equation on mesh 1 for $u_{1}$.
The errors were computed using the $L_{\infty}$ norm (left) and the
$L_{2}$ norm (right). \label{fig:3D_CD_mesh1_u1}}
\end{figure}

\begin{flushleft}
\begin{figure}
\begin{minipage}[t]{0.45\textwidth}%
\begin{center}
\includegraphics[width=1\textwidth]{3D_CD_mesh1_u2_Inf}
\par\end{center}%
\end{minipage}\hfill{} %
\begin{minipage}[t]{0.45\textwidth}%
\begin{center}
\includegraphics[width=1\textwidth]{3D_CD_mesh1_u3_Inf}
\par\end{center}%
\end{minipage}

\raggedright{}\caption{The $L_{\infty}$ norm errors for the 3D convection-diffusion equation
on mesh 1 for $u_{2}$ (left) and $u_{3}$ (right).\label{fig:3D_CD_mesh1_u2=0000263}}
\end{figure}

\par\end{flushleft}

\begin{flushleft}
\begin{figure}
\begin{minipage}[t]{0.45\textwidth}%
\begin{center}
\includegraphics[width=1\textwidth]{3D_CD_mesh2_u1_Inf}
\par\end{center}%
\end{minipage}\hfill{} %
\begin{minipage}[t]{0.45\textwidth}%
\begin{center}
\includegraphics[width=1\textwidth]{3D_CD_mesh2_u1_L2}
\par\end{center}%
\end{minipage}

\raggedright{}\caption{The errors for 3D convection-diffusion equation on mesh 2 for $u_{1}$.
The errors were computed using the $L_{\infty}$ norm (left) and the
$L_{2}$ norm (right). \label{fig:3D_CD_mesh2_u1}}
\end{figure}

\par\end{flushleft}

\begin{flushleft}
\begin{figure}
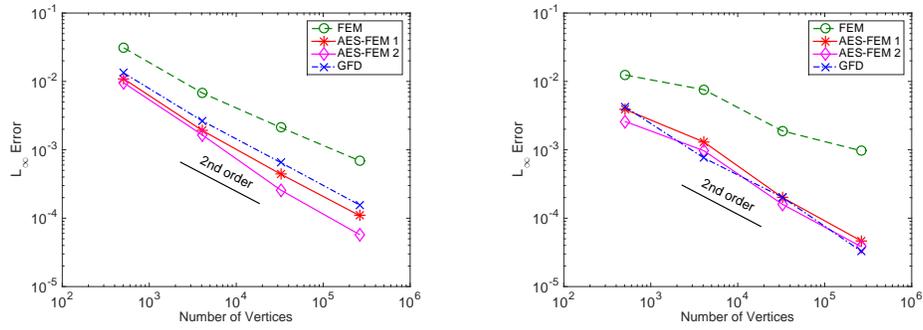

\begin{minipage}[t]{0.45\textwidth}%
\begin{center}
\includegraphics[width=1\textwidth]{3D_CD_mesh2_u2_Inf}
\par\end{center}%
\end{minipage}\hfill{} %
\begin{minipage}[t]{0.45\textwidth}%
\begin{center}
\includegraphics[width=1\textwidth]{3D_CD_mesh2_u3_Inf}
\par\end{center}%
\end{minipage}

\raggedright{}\caption{The $L_{\infty}$ errors for the 3D convection-diffusion equation
on mesh 2 for $u_{2}$ (left) and $u_{3}$ (right). \label{fig:3D_CD_mesh2_u2=0000263}}
\end{figure}

\par\end{flushleft}

\subsubsection{Element-Quality Dependence Test \label{sub:3D_mesh_quality_test}}

We test how FEM, AES-FEM, and GFD perform on a series of meshes with
progressively worse element shape quality. We begin with the most
refined mesh from mesh series 1. We select 69 out of the 1,500,282
elements and incrementally move one of their nodes towards the opposite
side so as to create sliver tetrahedra. We then solve the Poisson
equation with the polynomial analytic solution $u_{1}$ in (\ref{eq:polynomial_solution_3D})
and record the condition numbers of the coefficient and stiffness
matrices and the numbers of iterations required for the solver to
converge. We use the conjugate gradient method with Gauss-Seidel preconditioner
for FEM and we use GMRES with Gauss-Seidel preconditioner for AES-FEM
and GFD. We use a tolerance of $10^{-5}$ for both solvers. As a measure
of the mesh quality, we consider the cotangent of the minimum dihedral
angle in the mesh; as the minimum angle tends to zero, the cotangent
tends towards infinity. For very small angles, the cotangent of the
angle is approximately equal to the reciprocal of the angle. We estimate
the condition numbers using the MATLAB function $\mathtt{condest}$,
which computes a lower bound for the 1-norm condition number.

The worse the mesh quality, the higher the condition number of the
stiffness matrix resulting from FEM. The condition numbers of the
stiffness matrix from AES-FEM and the coefficient matrix from GFD
remain almost constant. As the condition number of the matrix rises
so does the number of iterations required for the solver to converge.
For FEM the number of iterations increases from 69 for the best mesh
to 831 for the most deformed mesh. The numbers of iterations for AES-FEM
and GFD remain almost constant, increasing from 56 to 59 and from
56 to 60, respectively. See Figure~\ref{fig:3D_poor_quality_meshes}
for a comparison of the condition numbers and iteration counts of
the solvers.

For each of the four methods, the errors were nearly constant over
the series of meshes. For FEM, the $L_{2}$ error on the 1st mesh
was $4.36\times10^{-4}$ and on the 6th mesh, the error was $4.37\times10^{-4}$.
For AES-FEM 1, the $L_{2}$ error on all the meshes was $2.92\times10^{-4}$.
For AES-FEM 2, the $L_{2}$ error on all the meshes was $1.30\times10^{-4}$.
For GFD, the $L_{2}$ error on the 1st mesh was $3.43\times10^{-5}$
and on the 6th mesh, the error was $3.40\times10^{-5}$.

\begin{figure}
\begin{minipage}[t]{0.45\textwidth}%
\begin{center}
\includegraphics[width=1\textwidth]{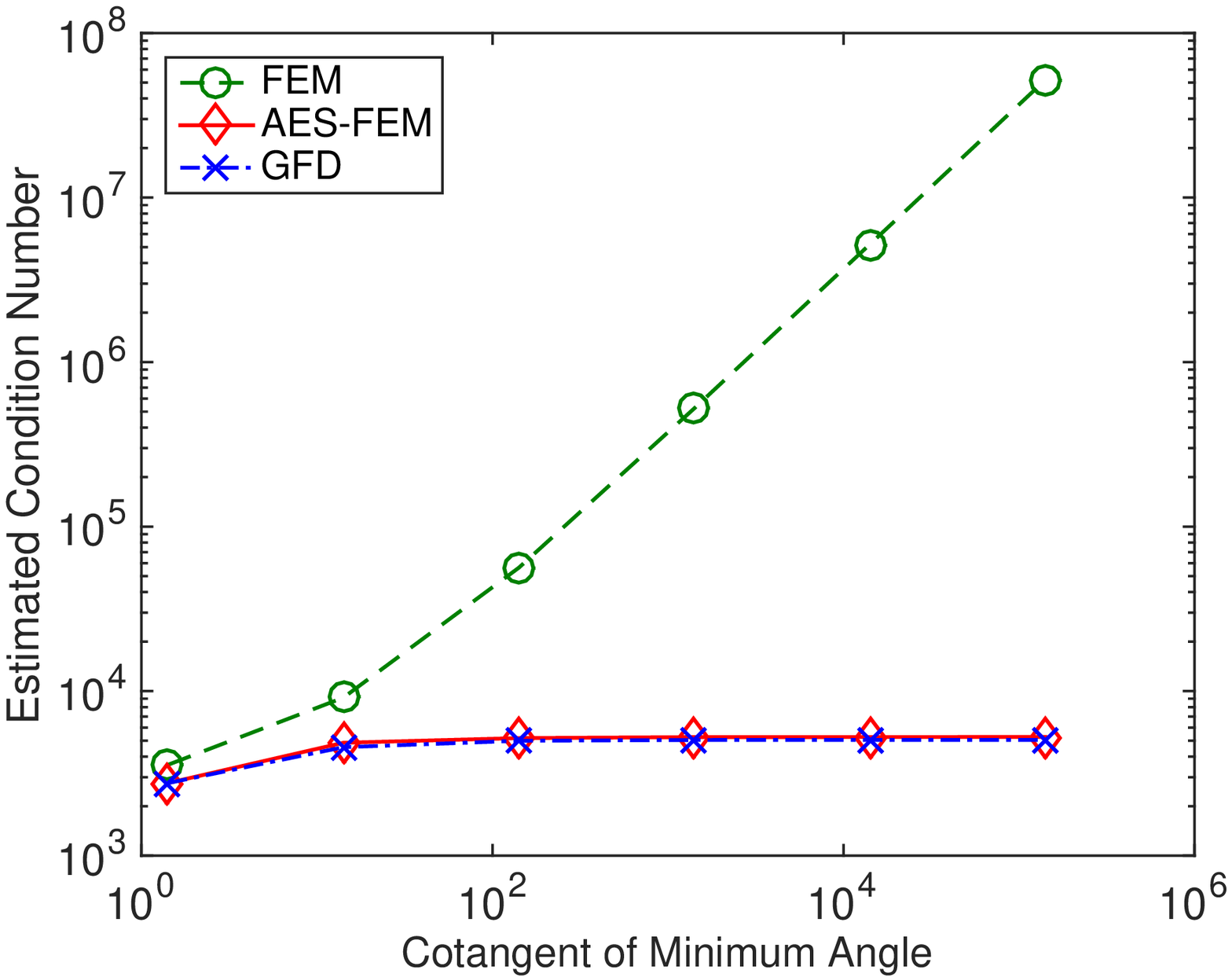}
\par\end{center}%
\end{minipage}\hfill{} %
\begin{minipage}[t]{0.45\textwidth}%
\begin{center}
\includegraphics[width=1\textwidth]{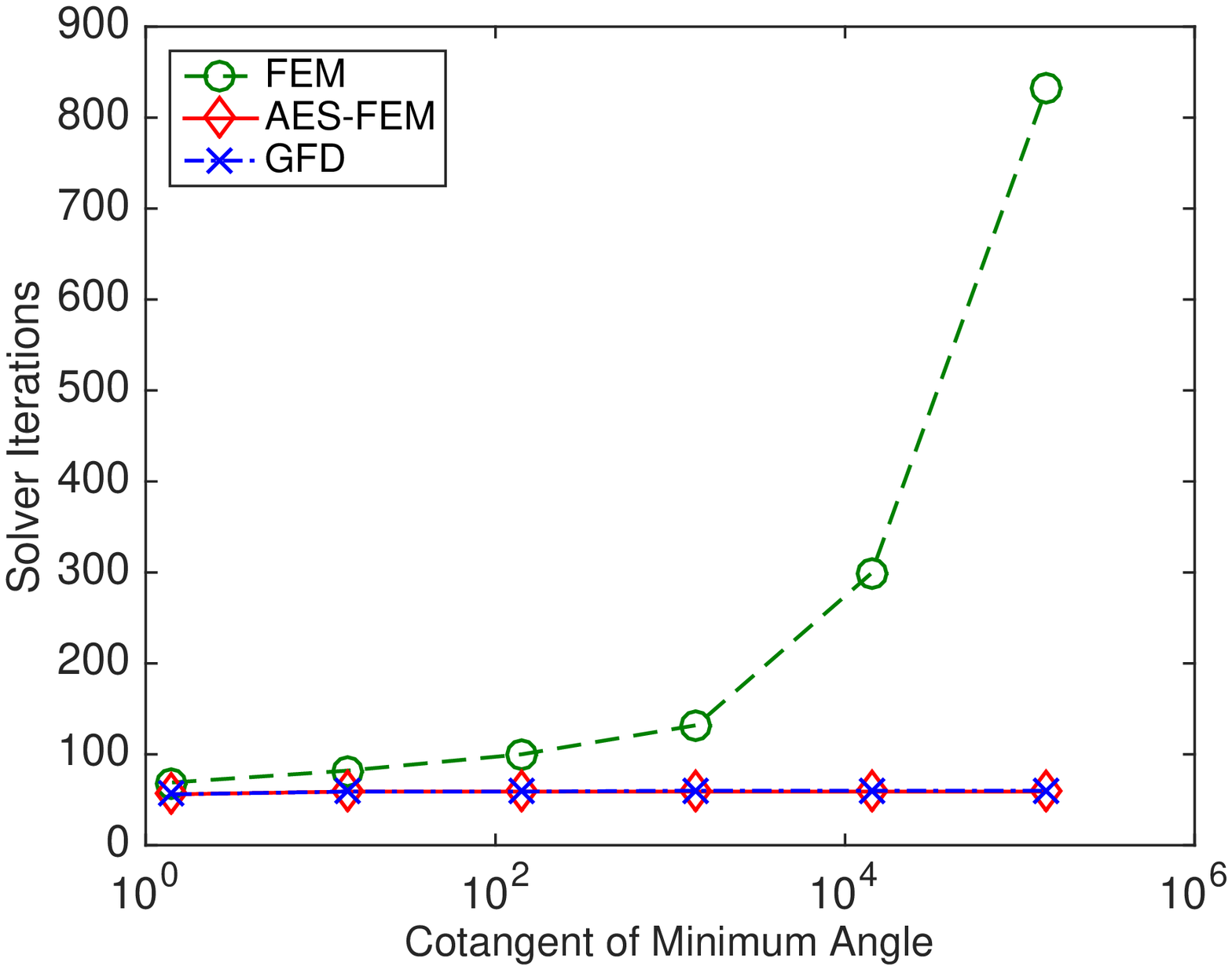}
\par\end{center}%
\end{minipage}

\raggedright{}\caption{The condition numbers of the stiffness matrices for FEM and AES-FEM
and the coefficient matrix for GFD (left) and the numbers of solver
iterations (right). Solvers used are preconditioned CG for FEM and
preconditioned GMRES for AES-FEM and GFD.\label{fig:3D_poor_quality_meshes}}
\end{figure}

\subsubsection{Efficiency}

We compare the runtimes of the four methods for solving the convection-diffusion
equation with the polynomial analytic solution $u_{1}$ on the most
refined mesh of series 2. As in 2D, the total time is decomposed into
4 subcategories: \textit{Initialization,} \textit{Assembly}, \textit{Preconditioner},
and \textit{Solver}. The preconditioner used is incomplete LU with
a drop tolerance of $10^{-1}$. GMRES with a tolerance of $10^{-8}$
is used as the solver. AES-FEM 1 and AES-FEM 2 each require 142 iterations
to converge, FEM requires 128 iterations, and GFD requires 138 iterations.
The majority of the time is spent assembling the matrix and solving
the system. See Figure \ref{fig:time_3D_CD} for the comparison. 

As in 2D, FEM is the most efficient method on a given mesh. Overall,
the total runtime of FEM is approximately 1.7 times faster than AES-FEM
1, 1.9 times faster than AES-FEM 2, and 1.8 times faster than GFD.
The assembly of FEM is approximately 5.6 times faster than AES-FEM
1, 6.5 times faster than AES-FEM 2, and 1.2 times faster than GFD.

In 3D, the difference of assembling the load vector using FEM basis
functions (AES-FEM 1) versus using GLP basis functions (AES-FEM 2)
is more pronounced than in 2D. The assembly in AES-FEM 1 is 8.3 seconds
shorter than that of AES-FEM 2. This means the assembly of AES-FEM
1 uses 14.4\% less time than that of AES-FEM 1 and the total runtime
is 8.7\% shorter. 

\begin{figure}
\begin{centering}
\includegraphics[scale=0.6]{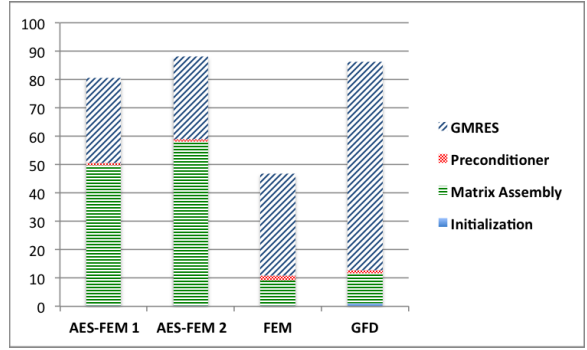}
\par\end{centering}

\caption{Runtimes for a 3D convection-diffusion equation on the most refined
mesh in series 2.\label{fig:time_3D_CD}}
\end{figure}

However, similar to 2D, AES-FEM is competitive with, and most of time
more efficient than, the classical FEM with linear basis functions
in terms of error versus runtime. Figure~\ref{fig:time_vs_error_3d}
shows the $L_{\infty}$ norm errors versus runtimes for the four methods
on mesh series 2 for the Poisson equation and the convection-diffusion
equation for $u_{2}$. For the Poisson equation, GFD is more efficient
on coarser meshes and AES-FEM 2 is more efficient for finer meshes.
For the convection-diffusion equation, GFD is more efficient on smaller
meshes and AES-FEM 2 is more efficient for finer meshes. AES-FEM 1
is also more efficient than FEM.

\begin{figure}
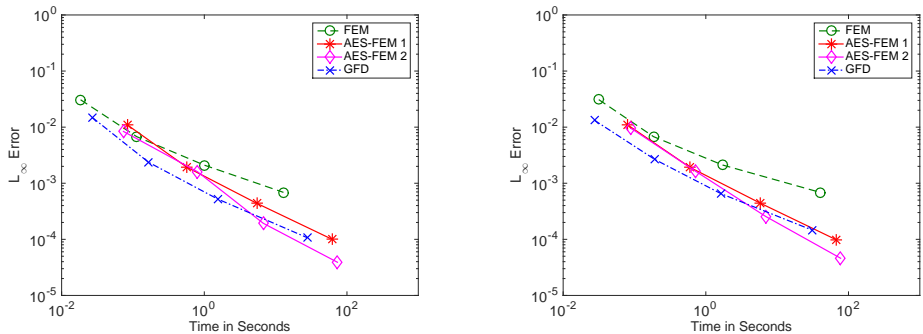

\begin{minipage}[t]{0.45\textwidth}%
\begin{center}
\includegraphics[width=1\textwidth]{error_vs_time_pois_mesh2_u2_inf_3d}
\par\end{center}%
\end{minipage}\hfill{} %
\begin{minipage}[t]{0.45\textwidth}%
\begin{center}
\includegraphics[width=1\textwidth]{error_vs_time_cd_u2_mesh2_inf_3d}
\par\end{center}%
\end{minipage}

\raggedright{}\caption{$L_{\infty}$ norm errors versus runtimes for a 3D Poisson equation
(left) and convection-diffusion equation (right) on mesh series 2.
Lower is better.\label{fig:time_vs_error_3d} }
\end{figure}

\section{Conclusions and Future Work\label{sec:Conclusions-and-Future}}

In this paper, we proposed the adaptive extended stencil finite element
method, which uses generalized Lagrange polynomial basis functions
constructed from weighted least squares approximations. The method
preserves the theoretical framework of the classical FEM and the simplicity
in imposing essential boundary conditions and integrating the stiffness
matrix. We presented the formulation of AES-FEM, showed that the method
is consistent, and discussed both the local and global stability of
the method. We described the implementation, including the mesh data
structure and the numerical algorithms. We compared the accuracy of
AES-FEM against the classical FEM with linear basis functions and
the quadratic generalized finite difference method for the Poisson
and convection-diffusion equations in both 2D and 3D. We showed improved
accuracy and stability of AES-FEM over FEM, and demonstrated that
the condition number of AES-FEM, and hence the convergence rate of
iterative solvers, are independent of the element quality of the mesh.
Our experiments also showed that AES-FEM is more efficient than the
classical FEM in terms of error versus runtime, while having virtually
the same sparsity patterns of the stiffness matrices.

As a general method, AES-FEM can use generalized Lagrange polynomial
basis functions of arbitrary degrees. We only focused on quadratic
basis functions in this paper. In future work, we will report higher-order
AES-FEM with cubic and higher-degree basis functions. The present
implementation of AES-FEM uses the standard hat functions as the weight
functions, which may lead to large errors when applied to tangled
meshes with inverted elements. We will report the resolution of tangled
meshes in a future publication. Finally, while AES-FEM is efficient
in terms of error versus runtime, it is much slower than the classical
FEM on a given mesh due to the slower computation of the basis functions
and the nonsymmetry of the stiffness matrix. The efficiency can be
improved substantially by leveraging parallelism and efficient multigrid
solvers, which we will report in the future.

\section*{Acknowledgements}

This work was supported by DoD-ARO under contract \#W911NF0910306.
The third author is also supported by a subcontract to Stony Brook
University from Argonne National Laboratory under Contract DE-AC02-06CH11357
for the SciDAC program funded by the Office of Science, Advanced Scientific
Computing Research of the U.S. Department of Energy.

\bibliographystyle{../wileyj}
\bibliography{../refs/aes-fem}

\end{document}